\newtheorem{theorem}{Theorem}[section]
\newtheorem{corollary}{Corollary}[section]
\newtheorem{definition}{Definition}[section]
\newtheorem{example}{Example}[section]
\newtheorem{proposition}{Proposition}[section]
\newtheorem{remark}{Remark}[section]
\author{
       Vladimir Rovenski\footnote{Department of Mathematics, University of Haifa, Mount Carmel, 3498838 Haifa, Israel
       \newline e-mail: {\tt vrovenski@univ.haifa.ac.il}}}
\title{On the geometry of a weakened $f$-structure}
\begin{document}

\date{}

\maketitle

\begin{abstract}
An $f$-structure, introduced by K.\,Yano in 1963 and subsequently studied by a number of geometers,
is a higher dimensional analog of almost complex and almost contact structures,
defined by a (1,1)-tensor field $f$ on a $(2n+p)$-dimensional manifold, which satisfies $f^3 + f = 0$ and has constant rank $2n$.
We recently introduced the weakened (globally framed) $f$-structure
(i.e., the complex structure on $f(TM)$ is replaced by a nonsingular skew-symmetric tensor)
and its subclasses of weak $K$-, ${\cal S}$-, and ${\cal C}$- structures
on Riemannian manifolds with totally geodesic foliations, which allow us to take a fresh look at the classical theory.
We demonstrate this by generalizing several known results on globally framed $f$-manifolds.
First, we express the covariant derivative of $f$ using a new tensor on a metric weak $f$-structure,
then we prove that on a weak $K$-manifold the characteristic vector fields are Killing and $\ker f$ defines a totally geodesic foliation,
an ${\cal S}$-structure is rigid, i.e., our weak ${\cal S}$-structure is an ${\cal S}$-structure,
and a metric weak $f$-structure with parallel tensor $f$ reduces to a weak~${\cal C}$-structure.
For $p=1$ we obtain the corresponding corollaries for weak almost contact, weak cosymplectic, and weak Sasakian structures.

\vskip1.5mm\noindent
\textbf{Keywords}: globally framed $f$-structure, Killing vector field, distribution

\vskip1.5mm
\noindent
\textbf{Mathematics Subject Classifications (2010)} 53C15, 53C25, 53D15
\end{abstract}

%\setcounter{secnumdepth}{4}
%%%%%%%%%%%%%%%%%%%%%%%%%%%%%%%%%%%%%%%%%%

%\begin{document}
%%%%%%%%%%%%%%%%%%%%%%%%%%%%%%%%%%%%%%%%%%

%\setcounter{page}{1}

\section*{Introduction}

In 1963, K.\,Yano \cite{yan} introduced an $f$-structure on a smooth manifold $M^{2n+p}$,
as a higher dimensional analog of almost complex ($p=0$) and almost contact ($p=1$) structures,
defined by a
%non-vanishing
(1,1)-tensor field $f$, which satisfies $f^3 + f = 0$ and has constant rank $2n$.
In this case, the tangent bundle $TM$ splits into two complementary subbundles: a $2n$-dimensional ${\rm im}\,f$ and a $p$-dimensional $\ker f$,
and the projection maps are $-f^2$ and $f^2 + 1$.
Moreover, the restriction of $f$ to ${\rm im}\,f$ determines a complex structure,
and the structure group of $TM$ reduces to $U(n)\times O(p)$.
An~interesting case studied by a number of geometers \cite{b1970,BP-2008A,DIP-2001,fip,gy,tpw} occurs when $\ker f$ is parallelizable
or is defined by a homomorphism of a $p$-dimensional Lie algebra $\mathfrak{g}$ to the Lie algebra of all vector fields on $M$,
i.e., $M$ admits a $\mathfrak{g}$-foliation, see~\cite{AM-1995}.
In the presence of a compatible metric, such $\mathfrak{g}$-foliation is a totally geodesic foliation spanned by Killing vector fields, e.g., \cite{ca-toh}.
For a 1-dimensional Lie algebra, a ${\mathfrak{g}}$-foliation is generated by a nonvanishing vector~field,
and we get (almost) contact metric structures as well as $K$-contact and Sasakian ones, see~\cite{blair2010riemannian}.

We recently introduced \cite{RWo-2} the weakened metric structures that generalize an $f$-structure and its satellites,
i.e., the complex structure on ${\rm im}\,f$ is replaced by a nonsingular skew-symmetric tensor, see \eqref{2.1} below,
and allow us to take a fresh look at the classical theory.
In~this paper, we continue the study of a weak $f$-structure associated with a Riemannian manifold endowed with a totally geodesic foliation,
and generalize several known results.
A~natu\-ral question arises: \textit{How rich are metric weak $f$-structures compared to the classical ones}?
We~study this question for weak ${K}$-, ${\cal S}$- and ${\cal C}$- structures;
the~results can be compared with corresponding results ($p=1$) in \cite{RovP-arxiv} for weak almost contact, weak cosymplectic, and weak Sasakian structures.
The~question for a metric weak $f$-structure in general and further investigation of the curvature of metric $f$-manifolds
and their connection with such popular structures as warped products and Ricci solitons are postponed to the future.
The~theory presented here can be used to deepen our knowledge of Riemannian geometry of
manifolds equipped with distributions, foliations and submersions.

This article consists of an introduction and five sections.
In Section~\ref{sec:1}, we discuss the properties of ``weak" metric structures generalizing certain classes of $f$-manifolds.
In Section~\ref{sec:2} we express the covariant derivative of $f$ of a metric weak $f$-structure using a new tensor
and show that on a weak $K$-manifold the characteristic vector fields are Killing and $\ker f$ defines a totally geodesic foliation.
In Section~\ref{sec:3a}, we apply to weak almost ${\cal S}$-manifolds the tensor~$h$.
In Section~\ref{sec:3} we complete the result in \cite{RWo-2}
and prove the rigidity theorem that any weak ${\cal S}$-structure is an ${\cal S}$-structure.
In Section~\ref{sec:4}, we show that a metric weak $f$-structure with parallel tensor $f$ reduces to a weak ${\cal C}$-structure,
we also give an example of such a structure on the product of~manifolds.

\section{Preliminaries}
\label{sec:1}

Here, we describe ``weak" metric structures generalizing certain classes of $f$-manifolds and discuss their properties.
A \textit{weak $f$-structure} on a smooth manifold $M^{2n+p}$ is defined by a $(1,1)$-tensor field $f$ of rank $2n$
and a~nonsingular $(1,1)$-tensor field $Q$ satisfying, see \cite{RWo-2},
\begin{eqnarray}\label{E-fQ-1}
 && f^3 + fQ = 0,\\
 %\quad {\rm rank}\,f = 2n,
 \label{2.1Q-nu}
 && Q\,\xi=\xi\quad (\xi\in\ker f).
\end{eqnarray}
Given such a structure, the tangent bundle $TM$ splits into two complementary subbundles $f(TM)$ and the kernel $\ker f$
(also called characteristic distribution).
If $\ker f$ is parallelizable, then we fix $p$ global vector fields $\xi_i\ (1\le i\le p)$, which span $\ker f$,
and dual one-forms $\eta^i$. Then we have
\begin{equation}\label{2.1}
 f^2 = -Q +\sum\nolimits_{\,i}\eta^i\otimes\xi_i, \quad \eta^i(\xi_j)=\delta^i_j ,
\end{equation}
called a \textit{framed weak $f$-structure} (or a \textit{weak f.pk-structure}).
For $p = 1$, this gives a weak almost contact structure, see~\cite{RWo-2}.

\begin{remark}\rm
The concept of an almost paracontact structure is analogous to the concept of an almost contact structure and is closely related to an almost product structure.
Similarly to \eqref{E-fQ-1}, we can define a \textit{weak para}-$f$-\textit{structure} by $f^3 - fQ = 0$, and for parallelizable $\ker f$ use
$f^2 = Q -\sum\nolimits_{\,i}\eta^i\otimes\xi_i$ instead of \eqref{2.1},
 see \cite{RWo-2} and \cite[Section~5.3.8]{Rov-Wa-2021}.
This allows us to generalize (as in Sections~\ref{sec:2}--\ref{sec:4} below) some results on para-$f$-structures introduced in~\cite{BN-1985,FP-2017}.
\end{remark}

Using \eqref{2.1}$_2$ we get $f(TM)=\bigcap_{\,i}\ker\eta^i$, and using \eqref{2.1}$_1$ we conclude that $f(TM)$ is invariant for~$f$, i.e.,
\begin{equation}\label{2.1-D}
 {f} X\in f(TM)\quad (X\in f(TM)).
\end{equation}
By \eqref{2.1} and \eqref{2.1-D}, $f(TM)$ is also invariant for $Q$.
 We say a framed weak $f$-structure is \textit{normal} if the following tensor
 (known for $Q={\rm id}_{\,TM}$, e.g., \cite{BP-2008A,DIP-2001}, or \cite{b1970} without factor 2) is identically zero:
\begin{align}\label{2.6X}
 N^{(1)} = [{f},{f}] + 2\sum\nolimits_{\,i} d\eta^i\otimes\,\xi_i .
\end{align}
The Nijenhuis torsion
%$[{f},{f}]$
of ${f}$ and the exterior derivative
%$d\eta^i$
of $\eta^i$ are given~by
\begin{align}\label{2.5}
 [{f},{f}](X,Y) & = {f}^2 [X,Y] + [{f} X, {f} Y] - {f}[{f} X,Y] - {f}[X,{f} Y],\ X,Y\in\mathfrak{X}_M , \\
\label{3.3A}
 d\eta^i(X,Y) & = \frac12\,\{X(\eta^i(Y)) - Y(\eta^i(X)) - \eta^i([X,Y])\},\quad X,Y\in\mathfrak{X}_M .
\end{align}

\begin{remark}\rm
Consider the product manifold $\bar M = M^{2n+p}\times\mathbb{R}^p$,
where $\mathbb{R}^p$ is a Euclidean space with an orthonormal basis $\partial_1,\ldots,\partial_p$,
and define tensor fields $\bar f$ and $\bar Q$ on $\bar M$ putting
\begin{align*}
 \bar f(X,\, \sum\nolimits_{\,i} a^i\partial_i) &= (fX-\sum\nolimits_{\,i} a^i\xi_i,\, \sum\nolimits_{\,j} \eta^j(X)\partial_j),\\
 \bar Q(X,\, \sum\nolimits_{\,i} a^i\partial_i) &= (QX,\, \sum\nolimits_{\,i} a^i\partial_i) .
\end{align*}
Hence, $\bar f(X,0)=(fX,0)$, $\bar Q(X,0)=(QX,0)$ for $X\in\ker f$,
$\bar f(\xi_i,0)=(0,\partial_i)$, $\bar Q(\xi_i,0)=(\xi_i,0)$
and
$\bar f(0,\partial_i)=(-\xi_i,0)$, $\bar Q(0,\partial_i)=(0,\partial_i)$.
Then it is easy to verify that $\bar f^{\,2}=-\bar Q$.
One can use the integrability condition $[\bar f, \bar f]=0$ of $\bar f$ to express the normality of a framed weak $f$-structure.
\end{remark}

If there exists a semi-Riemannian metric $g$ such that
$g(\xi_i,\xi_i)=\epsilon_i\in\{-1, 1\}$, $g_{\,|\,f(TM)}>0$ and
\begin{align}\label{2.2}
 g({f} X,{f} Y)= g(X,Q\,Y) -\sum\nolimits_{\,i}\epsilon_i\,\eta^i(X)\,\eta^i(Q\,Y),\quad X,Y\in\mathfrak{X}_M,
\end{align}
then $({f},Q,\xi_i,\eta^i,g)$ is called a {\it metric weak $f$-structure},
$M({f},Q,\xi_i,\eta^i,g)$ is called a \textit{metric weak $f$-manifold}, and $g$ is called a \textit{compatible metric}.
 Putting $Y=\xi_i$ in \eqref{2.2} and using \eqref{2.1Q-nu}, we get
\begin{align*}
%\label{2.2A}
  g(X,\xi_i) = \epsilon_i\,\eta^i(X),
\end{align*}
thus, $f(TM)\,\bot\,\ker f$ and $\{\xi_i\}$ is an orthonormal frame of $\ker f$.
For simplicity, we will assume $\epsilon_i=1$.

\begin{remark}\rm
According to \cite{RWo-2}, a framed weak $f$-structure admits a compatible metric if ${f}$
admits a skew-symmetric representation, i.e., for any $x\in M$ there exist a neighborhood $U_x\subset M$ and a~frame $\{e_k\}$ on $U_x$,
for which ${f}$ has a skew-symmetric matrix.
\end{remark}

The following statement is well-known for the case of $Q={\rm id}_{\,TM}$.

\begin{proposition}
%\label{P-6}
{\rm (a)}
For a framed weak $f$-structure the following equalities hold:
\[
 {f}\,\xi_i=0,\quad \eta^i\circ{f}=0\quad (1\le i\le p),\quad [Q,\,{f}]=0.
\]
{\rm (b)}
For $M({f},Q,\xi_i,\eta^i,g)$, the tensor ${f}$ is skew-symmetric and the tensor $Q$ is self-adjoint, i.e.,
\begin{equation}\label{E-Q2-g}
 g({f} X, Y) = -g(X, {f} Y),\quad
 g(QX,Y)=g(X,QY).
\end{equation}
\end{proposition}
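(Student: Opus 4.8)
I would establish part~(a) first and then feed it into part~(b). For~(a), the identity $f\xi_i=0$ is immediate, since by construction the $\xi_i$ span $\ker f$. For $\eta^i\circ f=0$ I would invoke the fact recorded just before the statement, that \eqref{2.1}$_2$ gives $f(TM)=\bigcap_j\ker\eta^j$; then $fX\in f(TM)\subseteq\ker\eta^i$ yields $\eta^i(fX)=0$. For $[Q,f]=0$: by \eqref{E-fQ-1} we have $f^3=-fQ$, while expanding $f^3=f^2\circ f$ via \eqref{2.1}$_1$ and using $\eta^j\circ f=0$ gives $f^3=(-Q+\sum_j\eta^j\otimes\xi_j)\circ f=-Qf$; comparing the two expressions yields $Qf=fQ$.

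For part~(b) I would use (a) throughout. Putting $Y=\xi_i$ in \eqref{2.2} and using $f\xi_i=0$, $Q\xi_i=\xi_i$ and $\eta^j(\xi_i)=\delta^j_i$ recovers $g(X,\xi_i)=\epsilon_i\,\eta^i(X)$, so (with $\epsilon_i=1$) $f(TM)\perp\ker f$ and $\{\xi_i\}$ is an orthonormal frame of $\ker f$. Since $f(TM)$ is $Q$-invariant (a preliminary fact, from \eqref{2.1} and \eqref{2.1-D}), for $X\in f(TM)$ one has $QX\in f(TM)$, hence $\eta^i(QX)=g(QX,\xi_i)=0=\eta^i(X)$, whereas for $X\in\ker f$ one has $QX=X$; by the splitting $TM=f(TM)\oplus\ker f$ this gives $\eta^i\circ Q=\eta^i$. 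Substituting into \eqref{2.2} yields the reduced identity
\[
 g(fX,fY)=g(X,QY)-\sum\nolimits_i\epsilon_i\,\eta^i(X)\,\eta^i(Y).
\]

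Its right-hand side is symmetric in $X,Y$, and so is $g(fX,fY)$; hence $g(X,QY)=g(QX,Y)$, i.e., $Q$ is self-adjoint. Finally, replacing $X$ by $fX$ in \eqref{2.2} annihilates the $\eta^i(fX)$ terms and leaves $g(f^2X,fY)=g(fX,QY)$; expanding $f^2X$ by \eqref{2.1}$_1$ and noting that $g(\xi_j,fY)=\epsilon_j\,\eta^j(fY)=0$, I obtain $g(fX,QY)=-g(QX,fY)$. As $Q$ is invertible, self-adjoint and commutes with $f$ (hence so does $Q^{-1}$), replacing $Y$ by $Q^{-1}Y$ converts this into $g(fX,Y)=-g(X,fY)$, i.e., $f$ is skew-symmetric.

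The one genuinely non-formal point — trivial when $Q={\rm id}_{\,TM}$, but not in general — is the commutation $[Q,f]=0$, equivalently $\eta^i\circ f=0$; I expect this to be the crux, and it is precisely the $Q$-invariance of $f(TM)$ (equivalently $f(TM)=\bigcap_i\ker\eta^i$), established in the preliminaries, that makes it go through; everything else is routine manipulation of \eqref{2.1} and \eqref{2.2}.
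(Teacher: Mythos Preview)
Your proof is correct and follows essentially the same route as the paper's: computing $f^3$ two ways to get $[Q,f]=0$, using the symmetry of $g(fX,fY)$ for the self-adjointness of $Q$, and substituting $fX$ (the paper substitutes $fY$) into \eqref{2.2} for the skew-symmetry of $f$. The only cosmetic differences are that you note $f\xi_i=0$ directly from $\xi_i\in\ker f$ (the paper derives it via $f^2\xi_i=0$ and \eqref{E-fQ-1}) and you introduce the auxiliary identity $\eta^i\circ Q=\eta^i$, whereas the paper simply restricts to $f(TM)$ where the $\eta^i$-terms vanish.
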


\begin{proof}
(a) By \eqref{2.1Q-nu} and \eqref{2.1}, ${f}^2\xi_i=0$.
Applying \eqref{E-fQ-1} to $f\xi_i$, we get $f\xi_i=0$.
To show $\eta^i\circ{f}=0$, note that $\eta^i({f}\,\xi_i)=\eta^i(0)=0$, and, using \eqref{2.1-D}, we get $\eta^i({f} X)=0$ for $X\in f(TM)$.
Next, using \eqref{2.1} and ${f}(Q\,\xi_i) = {f}\,\xi_i=0$, we get
\begin{align*}
 {f}^3 X & = {f}({f}^2 X) = -{f}\,QX +\sum\nolimits_{\,i}\eta^i(X)\,{f}\xi_i = -{f}\,QX,\\
 {f}^3 X & = {f}^2({f} X) = -Q\,{f} X +\sum\nolimits_{\,i}\eta^i({f} X)\,\xi_i = -Q\,{f} X
\end{align*}
for any $X\in f(TM)$. This and $[Q,\,{f}]\,\xi_i=0$ provide $[Q,\,{f}]=Q\,{f} - {f} Q = 0$.

(b) By~\eqref{2.2}, the~restriction $Q_{\,|\,f(TM)}$ is self-adjoint. This and \eqref{2.1Q-nu} provide \eqref{E-Q2-g}$_2$.
For any $Y\in f(TM)$ there is $\tilde Y\in f(TM)$ such that ${f}Y=\tilde Y$.
Thus, \eqref{E-Q2-g}$_1$ follows from \eqref{2.2} and \eqref{E-Q2-g}$_2$ with $X\in f(TM)$ and $\tilde Y$.
\end{proof}

\begin{remark}\rm
The Levi-Civita connection $\nabla$ of a semi-Riemannian metric $g$ is given by
\begin{align}\label{3.2}
 2\,g(\nabla_{X}Y,Z) &= X\,g(Y,Z) + Y\,g(X,Z) - Z\,g(X,Y) \notag\\
 &+ g([X,Y],Z) +g([Z,X],Y) - g([Y,Z],X),
\end{align}	
and has the properties
$X\,g(Y,Z)=g(\nabla_X\,Y,Z)+g(Y,\nabla_X\,Z)$ (metric compa\-tible)
and
$[X,Y]=\nabla_X\,Y-\nabla_Y\,X$ (without torsion).
Thus, \eqref{2.5} can be written in terms of $\nabla$ as
\begin{align}\label{4.NN}
 [{f},{f}](X,Y) = ({f}\nabla_Y{f} - \nabla_{{f} Y}{f}) X - ({f}\nabla_X{f} - \nabla_{{f} X}{f}) Y;
\end{align}
in particular, since ${f}\,\xi_i=0$,
\begin{align}\label{4.NNxi}
 [{f},{f}](X,\xi_i)= {f}(\nabla_{\xi_i}{f})X +\nabla_{{f} X}\,\xi_i -{f}\,\nabla_{X}\,\xi_i, \quad X\in \mathfrak{X}_M .
\end{align}
\end{remark}

The {fundamental $2$-form} $\Phi$ on $M({f},Q,\xi_i,\eta^i,g)$ is defined by
\begin{align*}
%\label{Phi-classic}
 \Phi(X,Y)=g(X,{f} Y),\quad X,Y\in\mathfrak{X}_M.
\end{align*}
Since $\eta^1\wedge\ldots\wedge\eta^p\wedge\Phi^n\ne0$,
% (hence, $M$ is orientable),
a metric weak ${f}$-manifold is orientable.

On a normal framed $f$-manifold with a compatible metric and $d\Phi=0$ (called $K$-manifold) the vector fields $\xi_i$ are Killing, e.g., \cite[Theorem~1.1]{b1970}, and we have $\nabla_{\xi_i}\,\xi_j=0$.
We get $\nabla_{X}\,\xi_i=-\frac12\,fX$ on ${\cal S}$-manifolds, and $\nabla_{X}\,\xi_i=0$ on ${\cal C}$-manifolds, e.g., \cite[Lemma~1.2]{b1970}.
In the framework of Riemannian geometry, ${\cal C}$- and ${\cal S}$- structures
generalize cosymplectic and Sasaki structures, respectively.

\begin{definition}\rm
A metric weak $f$-structure $({f},Q,\xi_i,\eta^i,g)$ is called a \textit{weak $K$-structure} if it is normal and the form $\Phi$ is closed,
i.e., $d \Phi=0$. We define two subclasses of weak $K$-manifolds as follows:
\textit{weak ${\cal C}$-manifolds} if $d\eta^i = 0$ for any $i$, and \textit{weak ${\cal S}$-manifolds}~if
\begin{align}\label{2.3}
 d\eta^i = \Phi,\quad 1\le i\le p .
\end{align}
Omitting the normality condition, we get the following:
a metric weak $f$-structure
%$M({f},Q,\xi_i,\eta^i,g)$
is called
(i)~a \textit{weak almost ${\cal S}$-structure} if \eqref{2.3} is valid;
(ii)~a \textit{weak almost ${\cal C}$-structure}
if $\Phi$ and $\eta^i$ are closed forms.
\end{definition}

For $p=1$, weak ${\cal C}$- and weak ${\cal S}$- manifolds reduce to weak (almost) cosymplectic manifolds and weak (almost) Sasakian manifolds, respectively, see \cite{RWo-2}.

Recall the following formulas with
the Lie derivative $\pounds_{\xi_i}$ in the $Z$-direction and $X,Y\in\mathfrak{X}_M$:
\begin{align}\label{3.3B}
 (\pounds_{Z}{f})X &  = [Z, {f} X] - {f} [Z, X],\\
\label{3.3C}
 (\pounds_{Z}\,\eta^j)X & = Z(\eta^j(X)) - \eta^j([Z, X]) , \\
\label{3.7}
\nonumber
 (\pounds_{Z}\,g)(X,Y) &= Z(g(X,Y)) - g([Z,X], Y) - g(X, [Z,Y]) \\
 & = g(\nabla_{X}\,Z, Y) + g(\nabla_{Y}\,Z, X).
\end{align}
The following tensors are well known in the theory of $f$-manifolds, see \cite{blair2010riemannian}:
\begin{align}
\label{2.7X}
 N^{(2)}_i(X,Y) &= (\pounds_{{f} X}\,\eta^i)Y - (\pounds_{{f} Y}\,\eta^i)X \overset{\eqref{3.3A}}= 2\,d\eta^i({f} X,Y) - 2\,d\eta^i({f} Y,X),  \\
\label{2.8X}
 N^{(3)}_i(X) &= (\pounds_{\xi_i}{f})X \overset{\eqref{3.3B}}= [\xi_i, {f} X] - {f} [\xi_i, X],\\
\label{2.9X}
 N^{(4)}_{ij}(X) &= (\pounds_{\xi_i}\,\eta^j)X \overset{\eqref{3.3C}}= \xi_i(\eta^j(X)) - \eta^j([\xi_i, X])
 = 2\,d\eta^j(\xi_i, X).
\end{align}
For $p=1$, the tensors \eqref{2.7X}--\eqref{2.9X} reduce to the following tensors on (weak) almost contact manifolds:
\begin{align*}
%\label{2.7X}
 N^{(2)}(X,Y) = (\pounds_{\varphi X}\,\eta)Y - (\pounds_{\varphi Y}\,\eta)X, \quad
%\label{2.8X}
 N^{(3)} = \pounds_{\xi}\,\varphi,\quad
%\label{2.9X}
 N^{(4)} = \pounds_{\xi}\,\eta .
\end{align*}

\section{The geometry of a metric weak $f$-structure}
\label{sec:2}

Here, we study the geometry of the characteristic distribution $\ker f$,
supplement the traditional sequence of tensors \eqref{2.6X} and \eqref{2.7X}--\eqref{2.9X}
with a new tensor $N^{(5)}$ and calculate the covariant derivative of $f$ on a metric weak $f$-structure.

A distribution ${\cal D}\subset TM$ is \textit{totally geodesic} if and only if
its second fundamental form vanishes, i.e., $\nabla_X Y+\nabla_Y X\in{\cal D}$ for any vector fields $X,Y\in{\cal D}$ --
%%%%%%%%%%%%%
this is the case when {any geodesic of $M$ that is tangent to ${\cal D}$ at one point is tangent to ${\cal D}$ at all its points},
e.g., \cite[Section~1.3.1]{Rov-Wa-2021}. Any integrable and totally geodesic distribution determines a totally geodesic foliation.
A foliation, whose orthogonal distribution is totally geodesic, is said to be a Riemannian foliation, e.g., \cite{Rov-Wa-2021}.
For example, a foliation is Riemannian if it is generated by Killing vector fields.

Define a ``small" (1,1)-tensor $\tilde Q$ (vanishing on an $f$-structure) by
\begin{equation*}
%\label{E-tildeQ}
 \tilde{Q} = Q - {\rm id}_{\,TM}.
\end{equation*}
We obtain $[\tilde{Q},{f}]=0$, see \eqref{2.1Q-nu},
and also ${f}^3+{f} = -\tilde{Q}{f}$.

Note that $X = X^\top + X^\bot$, where
$X^\top = X - \sum_{\,i}\eta^i(X)\,\xi_i$ is the projection of the vector $X\in TM$ onto $f(TM)$.

The following statement is based on \cite[Theorem~6.1]{blair2010riemannian}, i.e., $Q={\rm id}_{\, TM}$.

\begin{proposition}\label{thm6.1}
Let a metric weak $f$-structure be normal. Then $N^{(3)}_i$ and $N^{(4)}_{ij}$ vanish~and
\begin{align}\label{3.1KK}
 N^{(2)}_i(X,Y) =\eta^i([\tilde QX^\top,\,{f} Y]);
\end{align}
moreover, the characteristic distribution $\ker f$ is totally geodesic.
\end{proposition}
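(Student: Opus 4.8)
The plan is to derive all four assertions --- $N^{(3)}_i=0$, $N^{(4)}_{ij}=0$, formula \eqref{3.1KK}, and total geodesy of $\ker f$ --- from the single hypothesis $N^{(1)}=0$, by testing the normality tensor on suitable pairs and applying the $1$-forms $\eta^i$. First I would evaluate $N^{(1)}$ on a pair $(X,\xi_i)$. Since $f\xi_i=0$, the Nijenhuis torsion \eqref{2.5} collapses to $[f,f](X,\xi_i)=f^2[X,\xi_i]-f[fX,\xi_i]=f\big([\xi_i,fX]-f[\xi_i,X]\big)=fN^{(3)}_i(X)$, while the $d\eta^j$-part of \eqref{2.6X} contributes $-\sum_j N^{(4)}_{ij}(X)\,\xi_j$ by \eqref{2.9X} and the antisymmetry of $d\eta^j$. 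Hence $0=N^{(1)}(X,\xi_i)=fN^{(3)}_i(X)-\sum_j N^{(4)}_{ij}(X)\,\xi_j$, and splitting this along $TM=f(TM)\oplus\ker f$ forces $N^{(4)}_{ij}=0$ together with $fN^{(3)}_i=0$, i.e. $N^{(3)}_i$ is $\ker f$-valued. Applying $\eta^j$ and using $\eta^j\circ f=0$ and $N^{(4)}=0$ gives $\eta^j(N^{(3)}_i(X))=\eta^j([\xi_i,fX])=0$, so $N^{(3)}_i$ is also $f(TM)$-valued; therefore $N^{(3)}_i=0$.

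For the formula \eqref{3.1KK} I would instead evaluate $N^{(1)}=0$ on the pair $(fX,Y)$ and apply $\eta^i$. In the expansion of $[f,f](fX,Y)$ by \eqref{2.5} the two terms $-f[\,\cdot\,,\,\cdot\,]$ are annihilated by $\eta^i$; in the surviving terms I would substitute $f(fX)=f^2X=-QX+\sum_j\eta^j(X)\xi_j$ from \eqref{2.1}, expand the brackets using $[aV,W]=a[V,W]-(Wa)V$, invoke $\eta^i([\xi_j,fY])=0$ once more, and rewrite the $d\eta^i$-term via \eqref{3.3A}. After the resulting cancellation this produces the identity
\[
 (fX)(\eta^i(Y))-(fY)(\eta^i(X))=\eta^i(Q[fX,Y])+\eta^i([QX,fY]).
\]
On the other hand, expanding $N^{(2)}_i(X,Y)=2\,d\eta^i(fX,Y)-2\,d\eta^i(fY,X)$ directly by \eqref{3.3A} (again using $\eta^i\circ f=0$) gives
\[
 N^{(2)}_i(X,Y)=(fX)(\eta^i(Y))-(fY)(\eta^i(X))-\eta^i([fX,Y])-\eta^i([X,fY]).
\]
Substituting the previous identity and regrouping the four bracket terms yields $N^{(2)}_i(X,Y)=\eta^i(\tilde Q[fX,Y])+\eta^i([\tilde QX,fY])$ with $\tilde Q=Q-{\rm id}_{\,TM}$. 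Now $\tilde Q$ annihilates $\ker f$ (because $Q\xi_j=\xi_j$) and preserves $f(TM)$ (because $Q$ does), so ${\rm im}\,\tilde Q\subseteq f(TM)=\bigcap_k\ker\eta^k$; this kills the first summand and shows $\tilde QX=\tilde QX^\top$, whence $N^{(2)}_i(X,Y)=\eta^i([\tilde QX^\top,fY])$.

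Finally, for the last assertion I would feed $\xi_i$, $\xi_j$ and an arbitrary $Z\in f(TM)$ into the Koszul formula \eqref{3.2}. Because $\xi_i\bot f(TM)$ and $g(\xi_i,\xi_j)=\delta_{ij}$ is constant, only three bracket terms survive, and two of them, $g([Z,\xi_i],\xi_j)=-\eta^j([\xi_i,Z])$ and $g([\xi_j,Z],\xi_i)=\eta^i([\xi_j,Z])$, vanish because $N^{(4)}=0$ and $\eta^\bullet(Z)=0$. This leaves $2\,g(\nabla_{\xi_i}\xi_j,Z)=g([\xi_i,\xi_j],Z)$, whose symmetrization in $i,j$ is zero; hence $\nabla_{\xi_i}\xi_j+\nabla_{\xi_j}\xi_i\in\ker f$ for all $i,j$, i.e. $\ker f$ is totally geodesic.

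I expect the real work to be concentrated in the middle step: carrying the correction term $\sum_j\eta^j(X)\xi_j$ of $f^2$ through $\eta^i(N^{(1)}(fX,Y))$ and then matching the outcome term-by-term against the raw expansion of $N^{(2)}_i$. Once that bookkeeping is done, the identification of the excess piece $\eta^i(\tilde Q[fX,Y])$ as zero (via ${\rm im}\,\tilde Q\subseteq f(TM)$) is the one-line punchline, and the remaining steps are routine applications of the structure equations \eqref{2.1}, \eqref{2.5}, \eqref{3.3A}.
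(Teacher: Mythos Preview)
Your argument is correct and follows essentially the same route as the paper. The derivations of $N^{(4)}_{ij}=0$, $N^{(3)}_i=0$, and \eqref{3.1KK} are organized a bit more compactly than the paper's (you use the splitting $TM=f(TM)\oplus\ker f$ where the paper applies $f$ once more and invokes the nonsingularity of $Q$), but the underlying computation --- testing $N^{(1)}$ on $(X,\xi_i)$ and on $(fX,Y)$, then reading off $\eta^i$-components --- is identical. One cosmetic point: in your displayed identity $(fX)(\eta^i(Y))-(fY)(\eta^i(X))=\eta^i(Q[fX,Y])+\eta^i([QX,fY])$ the first $Q$ on the right is superfluous, since $\eta^i\circ Q=\eta^i$; the paper's version \eqref{2.9} has simply $\eta^i([fX,Y])$ there, which is the same thing.

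The only genuine difference is in the last step. The paper deduces $\pounds_{\xi_i}\eta^j=0$ from $N^{(4)}_{ij}=0$ via the Cartan formula \eqref{3.Ld}, rewrites it as $g(X,\nabla_{\xi_i}\xi_j)+g(\nabla_X\xi_i,\xi_j)=0$, and symmetrizes in $(i,j)$ to obtain the stronger statement $\nabla_{\xi_i}\xi_j+\nabla_{\xi_j}\xi_i=0$ (equation \eqref{3.30}), valid against \emph{all} test vectors $X$. Your Koszul-formula argument, testing only against $Z\in f(TM)$, yields $\nabla_{\xi_i}\xi_j+\nabla_{\xi_j}\xi_i\in\ker f$, which is exactly what ``totally geodesic'' requires and hence suffices for the proposition as stated; but note that the full identity \eqref{3.30} is invoked later (in the proof of Theorem~\ref{C-K}), so if you intend to reuse this proposition downstream you may want to upgrade to the paper's version.
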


\begin{proof}
%Claim: $N^{(4)}_{ij}=0$.
Assume $N^{(1)}(X,Y)=0$ for any $X,Y\in TM$. Taking $\xi_i$ instead of $Y$ and using the formula of Nijenhuis tensor \eqref{2.5}, we~get
\begin{align}\label{3.11}
 0 &= [{f},{f}](X,\xi_i) + 2\sum\nolimits_{\,j} d\eta^j(X,\xi_i)\,\xi_j \notag\\
%\label{3.12}
 &= {f}^2[X,\xi_i] - {f}[{f} X,\xi_i] + 2\sum\nolimits_{\,s} d\eta^s(X,\xi_i)\,\xi_s.
\end{align}
Taking the scalar product of \eqref{3.11} with $\xi_j$ and using
skew-symmetry of ${f}$ and ${f}\,\xi_i=0$, we~get
\begin{align}\label{3.11A}
 d\eta^j(\xi_i,\,\cdot)=0\quad ({\rm or},\ \iota_{\,\xi_i} d\eta^j=0);
\end{align}
hence, $N^{(4)}_{ij}=0$, see \eqref{2.9X}.
%Claim: $N^{(3)}_i=0$.
Next, combining \eqref{3.11} and \eqref{3.11A}, we get
\begin{align*}
 %\label{3.13}
 0 = [{f},{f}](X,\xi_i) = {f}^2[X,\xi_i] - {f}[{f} X,\xi_i] = {f}\,(\pounds_{\xi_i}{f})X .
\end{align*}
Applying ${f}$ and using \eqref{2.1} and $\eta^i\circ{f}=0$, we achieve
\begin{align}
\label{3.14}
 0 &= {f}^2 (\pounds_{\xi_i}{f})X
 = -Q(\pounds_{\xi_i}{f})X + \sum\nolimits_{\,j}\eta^j((\pounds_{\xi_i}{f})X)\,\xi_j \notag\\
% &= -Q(\pounds_{\xi_i}{f})X + \sum\nolimits_{\,j}\eta^j([\xi_i,{f} X])\,\xi_j - \sum\nolimits_{\,j}\eta^j({f} [\xi_i,X])\,\xi_j \notag\\
 &=  -Q(\pounds_{\xi_i}{f})X + \sum\nolimits_{\,j}\eta^j([\xi_i,{f} X])\,\xi_j.
\end{align}
Further, \eqref{3.11A} and \eqref{3.3A} yield
\begin{align}\label{3.11B}
	0=2\,d\eta^j({f} X, \xi_i)
	=({f} X)(\eta^j(\xi_i)) - \xi_i(\eta^j({f} X)) - \eta^j([{f} X, \xi_i])
	=\eta^j([\xi_i, {f} X]).
\end{align}
Since $Q$ is non-singular, from \eqref{3.14}--\eqref{3.11B} we get $\pounds_{\xi_i}{f}=0$, i.e, $N^{(3)}_i=0$, see~\eqref{2.8X}.
	
%Claim: $N^{(2)}_i(X,Y)=\eta^i([QX-X,{f} Y])$.
 Replacing $X$ by ${f} X$ in our assumption $N^{(1)}=0$ and using \eqref{2.5} and \eqref{3.3A}, we acquire
\begin{align}\label{2.6}
 0 &= g([{f},{f}]({f} X,Y) + 2\sum\nolimits_{\,j} d\eta^j({f} X,Y)\,\xi_j,\ \xi_i) \notag\\
 &= g([{f}^2 X,{f} Y],\xi_i) + ({f} X)(\eta^i(Y)) - \eta^i([{f} X,Y]) ,\quad 1\le i\le p.
\end{align}
Using \eqref{2.1} and equality
$[{f} Y,\, \eta^j(X)\,\xi_i] = ({f} Y)(\eta^j(X))\,\xi_i + \eta^j(X)[{f} Y, \xi_i]$, we rewrite \eqref{2.6} as
\begin{align}\label{2.8}\notag
 0 &= -\eta^i([QX, {f} Y]) +\sum\nolimits_{\,j}\eta^j(X)\,\eta^i([\xi_j, {f} Y]) \\
 & - ({f} Y)(\eta^i(X)) + ({f} X)(\eta^i(Y)) - \eta^i([{f} X,Y]).
\end{align}
Equation \eqref{3.11B} gives $\eta^i([{f} Y, \xi_j])=0$. So, \eqref{2.8} becomes
\begin{align}\label{2.9}
 -\eta^i([QX, {f} Y]) - ({f} Y)(\eta^i(X)) + ({f} X)(\eta^i(Y)) - \eta^i([{f} X,Y]) = 0.
\end{align}
Finally, combining \eqref{2.9} with \eqref{2.7X}, we get
\begin{align*}
%\label{3.1KK}
 N^{(2)}_i(X,Y) =\eta^i([\tilde QX,\,{f} Y]),\quad 1\le i\le p,
\end{align*}
from which and $X = X^\top +\sum\nolimits_{\,i}\eta^i(X)\,\xi_i$ the expression \eqref{3.1KK} of $N^{(2)}_i$ follows.
%Since $N^{(1)}=0$, by Proposition~\ref{thm6.1} we get $N^{(4)}_{ij}=0$, i.e., \eqref{3.11A}.
Using the identity
\begin{align}\label{3.Ld}
 \pounds_{\xi_i}=\iota_{\,{\xi_i}}\,d + d\,\iota_{\,{\xi_i}},
\end{align}
from \eqref{3.11A} and $\eta^i(\xi_j)=\delta^i_j$ we obtain
\begin{align*}
%\label{2.Q2}
 \pounds_{\xi_i}\,\eta^j = d (\eta^j(\xi_i)) + \iota_{\,\xi_i}\, d\eta^j = 0.
\end{align*}
On the other hand, by \eqref{3.3C} we have
\[
 (\pounds_{\xi_i}\,\eta^j)X= g(X,\nabla_{\xi_i}\,\xi_j)+g(\nabla_{X}\,\xi_i,\,\xi_j),\quad
 X\in\mathfrak{X}_M.
\]
Symmetrizing this and using $\pounds_{\xi_i}\,\eta^j =0$ and $g(\xi_i,\, \xi_j)=\delta_{ij}$ yield
\begin{align}\label{3.30}
 \nabla_{\xi_i}\,\xi_j+\nabla_{\xi_j}\,\xi_i =0,
\end{align}
thus, the distribution $\ker f$ is totally geodesic.
\end{proof}

Recall the co-boundary formula for exterior derivative $d$ on a $2$-form $\Phi$,
\begin{align}\label{3.3}
 d\Phi(X,Y,Z) &= \frac{1}{3}\,\big\{ X\,\Phi(Y,Z) + Y\,\Phi(Z,X) + Z\,\Phi(X,Y) \notag\\
 &-\Phi([X,Y],Z) - \Phi([Z,X],Y) - \Phi([Y,Z],X)\big\}.
\end{align}
By direct calculation we get the following, see also \cite[Proposition~2.5(a)]{BP-2008A} for $Q={\rm id}_{\,TM}$:
\begin{align}\label{3.9A}
 (\pounds_{\xi_i}\,\Phi)(X,Y) = (\pounds_{\xi_i}\,g)(X, {f}Y) + g(X,(\pounds_{\xi_i}{f})Y) .
\end{align}

The following result generalizes \cite[Theorem~1.1]{b1970}.

\begin{theorem}\label{C-K}
On a weak $K$-manifold the vector fields $\xi_1,\ldots,\xi_p$ are Killing;
moreover, $\nabla_{\xi_k}\,\xi_j=0$, i.e., the cha\-racteristic distribution $\ker f$ is integrable and defines a totally geodesic foliation.
\end{theorem}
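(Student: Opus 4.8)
The plan is to use, besides $d\Phi=0$, only the consequences of normality already recorded in Proposition~\ref{thm6.1} — namely $N^{(3)}_i=\pounds_{\xi_i}{f}=0$, $\iota_{\xi_i}d\eta^j=0$, and $\nabla_{\xi_i}\xi_j+\nabla_{\xi_j}\xi_i=0$ from \eqref{3.30} — and throughout to split a tangent vector into its $f(TM)$-part and its $\ker f$-part, using that ${f}$ is injective on $f(TM)$ (on $f(TM)=\bigcap_i\ker\eta^i$ the identity \eqref{2.1} gives ${f}^2=-Q$ with $Q$ nonsingular).

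First I would prove $\nabla_{\xi_i}\xi_j=0$. Rewriting $(\pounds_{\xi_i}{f})X=[\xi_i,{f}X]-{f}[\xi_i,X]$ of \eqref{3.3B} through $\nabla$ and using ${f}\xi_i=0$, the relation $\pounds_{\xi_i}{f}=0$ becomes $(\nabla_{\xi_i}{f})X=\nabla_{{f}X}\xi_i-{f}\nabla_X\xi_i$; evaluating at $X=\xi_j$ and using $(\nabla_{\xi_i}{f})\xi_j=-{f}\nabla_{\xi_i}\xi_j$ gives ${f}\nabla_{\xi_i}\xi_j={f}\nabla_{\xi_j}\xi_i$, hence, by \eqref{3.30}, ${f}\nabla_{\xi_i}\xi_j=0$, so by injectivity of ${f}$ on $f(TM)$ the $f(TM)$-part of $\nabla_{\xi_i}\xi_j$ vanishes; that is, $\nabla_{\xi_i}\xi_j\in\ker f$. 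For the $\ker f$-part I would apply \eqref{3.3A} to $d\eta^j$ at $(\xi_i,\xi_k)$: the derivative terms vanish since $\eta^j(\xi_k)$ is constant, while $\eta^j([\xi_i,\xi_k])=g(\nabla_{\xi_i}\xi_k-\nabla_{\xi_k}\xi_i,\xi_j)=2\,g(\nabla_{\xi_i}\xi_k,\xi_j)$ by \eqref{3.30} and $\eta^j(Z)=g(Z,\xi_j)$; so $2\,d\eta^j(\xi_i,\xi_k)=-2\,g(\nabla_{\xi_i}\xi_k,\xi_j)$, which is $0$ because $\iota_{\xi_i}d\eta^j=0$. Therefore $\nabla_{\xi_i}\xi_j=0$, whence $[\xi_i,\xi_j]=\nabla_{\xi_i}\xi_j-\nabla_{\xi_j}\xi_i=0$, so $\ker f$ is integrable, and since $\nabla_XY\in\ker f$ for sections $X,Y$ of $\ker f$, the leaves are totally geodesic, i.e.\ $\ker f$ defines a totally geodesic foliation.

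Next I would show that each $\xi_i$ is Killing. From skew-symmetry of ${f}$, see \eqref{E-Q2-g}, and ${f}\xi_i=0$, one gets $(\iota_{\xi_i}\Phi)(X)=g(\xi_i,{f}X)=-g({f}\xi_i,X)=0$; hence on a weak $K$-manifold, where $d\Phi=0$, the Cartan formula \eqref{3.Ld} gives $\pounds_{\xi_i}\Phi=\iota_{\xi_i}d\Phi+d\,\iota_{\xi_i}\Phi=0$. Substituting this and $\pounds_{\xi_i}{f}=0$ into \eqref{3.9A} yields $(\pounds_{\xi_i}g)(X,{f}Y)=0$ for all $X,Y$; since ${f}Y$ ranges over all of $f(TM)$, the symmetric tensor $\pounds_{\xi_i}g$ vanishes whenever one of its arguments lies in $f(TM)$, and the remaining entries $(\pounds_{\xi_i}g)(\xi_j,\xi_k)=g(\nabla_{\xi_j}\xi_i,\xi_k)+g(\nabla_{\xi_k}\xi_i,\xi_j)$ vanish by the previous step. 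Thus $\pounds_{\xi_i}g=0$.

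The hard part will be the $\ker f$-directions: one has to control both the $f(TM)$-part and the $\ker f$-part of $\nabla_{\xi_i}\xi_j$ — the former through $\pounds_{\xi_i}{f}=0$ and injectivity of ${f}$ on $f(TM)$, the latter through $\iota_{\xi_i}d\eta^j=0$ — and one has to be sure that $\iota_{\xi_i}\Phi=0$ together with $d\Phi=0$ genuinely forces $\pounds_{\xi_i}\Phi=0$, since that is precisely what makes \eqref{3.9A} usable for the metric.
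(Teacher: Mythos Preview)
Your proof is correct and uses the same core ingredients as the paper: the consequences of normality from Proposition~\ref{thm6.1} ($\pounds_{\xi_i}f=0$, $\iota_{\xi_i}d\eta^j=0$, and \eqref{3.30}), the Cartan identity to get $\pounds_{\xi_i}\Phi=0$ from $\iota_{\xi_i}\Phi=0$ and $d\Phi=0$, and then \eqref{3.9A} to conclude $(\pounds_{\xi_i}g)(X,fY)=0$. The only difference is organizational: you first establish $\nabla_{\xi_i}\xi_j=0$ directly (getting its $f(TM)$-part from $\pounds_{\xi_i}f=0$ and \eqref{3.30}, and its $\ker f$-part from $\iota_{\xi_i}d\eta^j=0$), and then deduce integrability and the vanishing of $(\pounds_{\xi_i}g)(\xi_j,\xi_k)$ from that; the paper instead proves the Killing property first using only \eqref{3.30} for the $(\xi_j,\xi_k)$-entries, and obtains integrability at the end from $d\Phi(X,\xi_i,\xi_j)=0$. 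Your ordering has the small advantage of yielding $\nabla_{\xi_i}\xi_j=0$ explicitly (the paper's proof states it in the theorem but does not spell out the final step), and of not invoking $d\Phi=0$ for the integrability part.
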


\begin{proof}
By Proposition~\ref{thm6.1}, the distribution $\ker f$ is totally geodesic and $N^{(3)}_i=\pounds_{\xi_i}{f}=0$.
Using $\iota_{\,{\xi_i}}\Phi=0$ and condition $d\Phi=0$ in the identity \eqref{3.Ld},
we get $\pounds_{\xi_i}\Phi=0$. Thus, from \eqref{3.9A} we obtain $(\pounds_{\xi_i}\,g)(X, {f}Y)=0$.
To show $\pounds_{\xi_i}\,g=0$, we will examine $(\pounds_{\xi_i}\,g)(fX, \xi_j)$ and $(\pounds_{\xi_i}\,g)(\xi_k, \xi_j)$.
Using $\pounds_{\xi_i}\,\eta^j =0$,
we get $(\pounds_{\xi_i}\,g)(fX, \xi_j)=(\pounds_{\xi_i}\,\eta^j)fX -g(fX, [\xi_i,\xi_j])=-g(fX, [\xi_i,\xi_j])=0$.
Next, using \eqref{3.30}, we get
\[
 (\pounds_{\xi_i}\,g)(\xi_k, \xi_j)= -g(\xi_i, \nabla_{\xi_k}\,\xi_j+\nabla_{\xi_j}\,\xi_k) = 0.
\]
Thus, $\xi_i$ is a Killing vector field, i.e., $\pounds_{\xi_i}\,g=0$.
Finally, from $d\Phi(X,\xi_i,\xi_j)=0$ and \eqref{3.3} we obtain $g([\xi_i,\xi_j], fX)=0$, i.e., $\ker f$ is integrable.
\end{proof}

\begin{theorem}\label{thm6.2}
For a weak almost ${\cal S}$-structure,
% $({f},Q,\xi_i,\eta^i,g)$,
the tensors $N^{(2)}_i$ and $N^{(4)}_{ij}$ vanish;
moreover, $N^{(3)}_i$ vanishes if and only if $\,\xi_i$ is a Killing vector field.
\end{theorem}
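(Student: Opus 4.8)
The plan is to exploit the defining relation $d\eta^i=\Phi$ of a weak almost $\mathcal{S}$-structure in the four tensor formulas \eqref{2.7X}--\eqref{2.9X} and \eqref{3.9A}, following the classical pattern but tracking where $Q$ (equivalently $\tilde Q$) enters. First I would treat $N^{(4)}_{ij}$: by \eqref{2.9X}, $N^{(4)}_{ij}(X)=2\,d\eta^j(\xi_i,X)=2\,\Phi(\xi_i,X)=2\,g(\xi_i,{f}X)$, and since ${f}\,\xi_i=0$ and ${f}$ is skew-symmetric, $g(\xi_i,{f}X)=-g({f}\xi_i,X)=0$; hence $N^{(4)}_{ij}=0$. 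Next, for $N^{(2)}_i$ use \eqref{2.7X}: $N^{(2)}_i(X,Y)=2\,d\eta^i({f}X,Y)-2\,d\eta^i({f}Y,X)=2\,\Phi({f}X,Y)-2\,\Phi({f}Y,X)=2\,g({f}X,{f}Y)-2\,g({f}Y,{f}X)=0$, since $\Phi(X,Y)=g(X,{f}Y)$ and the right-hand side is manifestly symmetric in $X,Y$. So the vanishing of $N^{(2)}_i$ and $N^{(4)}_{ij}$ is immediate from \eqref{2.3} and skew-symmetry of ${f}$, with no contribution from $\tilde Q$.

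For the equivalence concerning $N^{(3)}_i$, I would use \eqref{3.9A}: $(\pounds_{\xi_i}\,\Phi)(X,Y)=(\pounds_{\xi_i}\,g)(X,{f}Y)+g(X,(\pounds_{\xi_i}{f})Y)$. The idea is to show $\pounds_{\xi_i}\Phi=0$ independently, so that $\pounds_{\xi_i}{f}=0$ becomes equivalent to $(\pounds_{\xi_i}g)(X,{f}Y)=0$ for all $X,Y$, and then to upgrade this latter condition to $\pounds_{\xi_i}g=0$. To see $\pounds_{\xi_i}\Phi=0$: since $\Phi=d\eta^i$, we have $\pounds_{\xi_i}\Phi=\pounds_{\xi_i}d\eta^i=d\,\pounds_{\xi_i}\eta^i$ (as $\pounds$ commutes with $d$); and $\pounds_{\xi_i}\eta^i=\iota_{\xi_i}d\eta^i+d(\eta^i(\xi_i))=\iota_{\xi_i}\Phi+d(1)=\iota_{\xi_i}\Phi$, which vanishes because $\Phi(\xi_i,X)=g(\xi_i,{f}X)=0$ as above. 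Hence $\pounds_{\xi_i}\Phi=0$, and \eqref{3.9A} gives $(\pounds_{\xi_i}g)(X,{f}Y)=-g(X,(\pounds_{\xi_i}{f})Y)$ for all $X,Y$.

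The main obstacle is the final bookkeeping: from $(\pounds_{\xi_i}g)(X,{f}Y)=0$ for all $X,Y$ one must conclude $\pounds_{\xi_i}g=0$, and conversely. Since ${f}$ has rank $2n$ with ${f}(TM)=f(TM)$, the condition $(\pounds_{\xi_i}g)(X,{f}Y)=0$ controls $\pounds_{\xi_i}g$ only on the $f(TM)$-slot; I must separately handle the $\ker f$ directions. For this I would mimic the argument in the proof of Theorem~\ref{C-K}: $(\pounds_{\xi_i}g)(\xi_k,\xi_j)=-g(\xi_i,\nabla_{\xi_k}\xi_j+\nabla_{\xi_j}\xi_k)$, and $(\pounds_{\xi_i}g)({f}X,\xi_j)=(\pounds_{\xi_i}\eta^j)({f}X)-g({f}X,[\xi_i,\xi_j])$. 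Here I need $\pounds_{\xi_i}\eta^j=0$ and $\ker f$ totally geodesic (i.e. $\nabla_{\xi_k}\xi_j+\nabla_{\xi_j}\xi_k=0$) — these followed from normality in Proposition~\ref{thm6.1}, but \textbf{here normality is not assumed}, so I must re-derive them (or their needed consequences) directly from $d\eta^i=\Phi$. In fact $\pounds_{\xi_i}\eta^j=\iota_{\xi_i}d\eta^j+d(\eta^j(\xi_i))=\iota_{\xi_i}\Phi+0=0$ holds automatically for a weak almost $\mathcal{S}$-structure, so that gap closes; and $(\pounds_{\xi_i}g)(\xi_k,\xi_j)=-g(\nabla_{\xi_k}\xi_i+\nabla_{\xi_j}\xi_i,\text{(reindex)})$ must be related to $\pounds_{\xi_i}{f}$ via \eqref{4.NNxi}-type identities or via $\pounds_{\xi_i}\eta^j=0$ rewritten with $\nabla$, showing the $\ker f$-part of $\pounds_{\xi_i}g$ is forced to vanish once $\pounds_{\xi_i}{f}=0$. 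Assembling these three slots ($f(TM)\times f(TM)$, $f(TM)\times\ker f$, $\ker f\times\ker f$) gives $\pounds_{\xi_i}g=0\iff\pounds_{\xi_i}{f}=0$, i.e. $\xi_i$ Killing $\iff N^{(3)}_i=0$, which is the assertion. I would present the forward and reverse implications together via the identity $(\pounds_{\xi_i}g)(X,{f}Y)=-g(X,(\pounds_{\xi_i}{f})Y)$ combined with the observation that the left side determines $\pounds_{\xi_i}g$ on all of $TM\times TM$ once the $\ker f$ slots are pinned down as above.
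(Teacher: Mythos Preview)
Your approach is essentially the paper's: both derive $N^{(2)}_i=N^{(4)}_{ij}=0$ by direct substitution of $d\eta^i=\Phi$ and skew-symmetry of $f$, then use the identity $(\pounds_{\xi_i}\Phi)(X,Y)=(\pounds_{\xi_i}g)(X,fY)+g(X,(\pounds_{\xi_i}f)Y)$ together with $\pounds_{\xi_i}\Phi=0$ (the paper phrases this as $\pounds_{\xi_i}d\eta^j=0$ via Cartan's formula and $d\Phi=dd\eta^j=0$, which is exactly your computation). You are actually more careful than the paper on the ``final bookkeeping'': the paper simply asserts the equivalence once $(\pounds_{\xi_i}g)(X,fY)=-g(X,(\pounds_{\xi_i}f)Y)$ is obtained, whereas you correctly observe this leaves the $\ker f\times\ker f$ slot of $\pounds_{\xi_i}g$ unaddressed. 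One simplification of your plan: that slot vanishes \emph{unconditionally} (not only when $N^{(3)}_i=0$), since $\pounds_{\xi_i}\eta^j=0$ rewrites as $g(X,\nabla_{\xi_i}\xi_j)+g(\nabla_X\xi_i,\xi_j)=0$, and evaluating at $X=\xi_k$, adding the same with $j\leftrightarrow k$, and using $\xi_i\,g(\xi_j,\xi_k)=0$ gives $(\pounds_{\xi_i}g)(\xi_j,\xi_k)=0$ directly---so the equivalence is cleaner than you feared.
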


\begin{proof} Applying \eqref{2.3} in \eqref{2.7X} and using skew-symmetry of ${f}$ we get $N^{(2)}_i=0$.
Equation \eqref{2.3} with $Y=\xi_i$ yields $d\eta^j(X,\xi_i)=g(X,{f}\,\xi_i)=0$ for any $X\in\mathfrak{X}_M$; thus, we get \eqref{3.11A},
i.e., $N^{(4)}_{ij}=0$.

Next, invoking \eqref{2.3} in the equality
%\eqref{3.8}
\begin{align*}
%\label{3.8}
 (\pounds_{\xi_i}\,d\eta^j)(X,Y) = \xi_i(d\eta^j(X,Y)) - d\eta^j([\xi_i,X], Y) - d\eta^j(X,[\xi_i,Y]),
\end{align*}
and using \eqref{3.7}, we obtain for all $i,j$
\begin{align}\label{3.9}
 (\pounds_{\xi_i}\,d\eta^j)(X,Y) = (\pounds_{\xi_i}\,g)(X, {f}Y) + g(X,(\pounds_{\xi_i}{f})Y).
\end{align}
Since $\pounds_V=\iota_{\,V}\circ d+d\circ\iota_{\,V}$, the exterior derivative $d$ commutes with the Lie-derivative, i.e., $d\circ\pounds_V = \pounds_V\circ d$, and as in the proof of Theorem~\ref{C-K}, we get that $d\eta^i$ is invariant under the action of $\xi_i$, i.e., $\pounds_{\xi_i}\,d\eta^j=0$.
Therefore, \eqref{3.9} implies that $\xi_i$ is a Killing vector field if and only if $N^{(3)}_i=0$.
\end{proof}

\begin{theorem}\label{thm6.2C}
For a weak almost ${\cal C}$-structure, we get $N^{(2)}_i=0$, $N^{(4)}_{ij}=0$, $N^{(1)}=[{f},{f}]$, and
\begin{align}\label{6.1d}
 [\xi_i, \xi_j] & = 0,\quad 1\le i,j\le p ,
\end{align}
in particular, the characteristic distribution $\ker f$ is tangent to a totally geode\-sic foliation.
Moreover, $N^{(3)}_i=0\ (1\le i\le p)$ if and only if each $\,\xi_i$ is a Killing vector field.
\end{theorem}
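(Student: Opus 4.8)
The plan is to read off each assertion from the closedness of the forms $\eta^i$ and $\Phi$, in the spirit of Theorems~\ref{C-K} and~\ref{thm6.2}. Since $d\eta^i=0$, formula~\eqref{2.9X} gives $N^{(4)}_{ij}=2\,d\eta^j(\xi_i,\cdot)=0$; formula~\eqref{2.7X} gives $N^{(2)}_i(X,Y)=2\,d\eta^i(fX,Y)-2\,d\eta^i(fY,X)=0$; and the definition~\eqref{2.6X} collapses at once to $N^{(1)}=[f,f]$. These three are immediate.

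For $[\xi_i,\xi_j]=0$ I would argue in two stages. Evaluating $d\eta^k=0$ on the pair $(\xi_i,\xi_j)$ via~\eqref{3.3A}, and using that $\eta^k(\xi_j)$ is constant, gives $\eta^k([\xi_i,\xi_j])=0$ for every $k$, hence $[\xi_i,\xi_j]\in f(TM)=\bigcap_k\ker\eta^k$. Then, since $f\xi_i=0$ and $f(TM)\perp\ker f$, all terms of~\eqref{3.3} for $d\Phi(\xi_i,\xi_j,X)$ vanish except $\Phi([\xi_i,\xi_j],X)$, so $d\Phi=0$ yields $g([\xi_i,\xi_j],fX)=0$ for all $X$; as $g$ is positive definite on $f(TM)$, the bracket vanishes, and thus $\ker f$ is integrable. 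To see that the resulting foliation is totally geodesic I would expand $2\,g(\nabla_{\xi_i}\xi_j,fX)$ by the Koszul formula~\eqref{3.2}: the terms $\xi_i\,g(\xi_j,fX)$, $\xi_j\,g(\xi_i,fX)$ and $(fX)\,g(\xi_i,\xi_j)$ vanish by orthogonality of $f(TM)$ and $\ker f$ and because $g(\xi_i,\xi_j)$ is constant, the term $g([\xi_i,\xi_j],fX)$ has just been shown to vanish, and the two remaining bracket terms $g([fX,\xi_i],\xi_j)$ and $g([\xi_j,fX],\xi_i)$ are zero since $\eta^\ell\circ f=0$ and $d\eta^\ell=0$ force $\eta^\ell([fX,\xi_m])=0$ via~\eqref{3.3A}. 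Hence $\nabla_{\xi_i}\xi_j\in\ker f$, so the second fundamental form of $\ker f$ is zero. (Combined with $0=\xi_i\,g(\xi_j,\xi_k)=g(\nabla_{\xi_i}\xi_j,\xi_k)+g(\xi_j,\nabla_{\xi_i}\xi_k)$ and the symmetry $\nabla_{\xi_i}\xi_j=\nabla_{\xi_j}\xi_i$, this in fact forces $\nabla_{\xi_i}\xi_j=0$.)

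For the last statement I would first note that $\iota_{\,\xi_i}\Phi=0$, since $\Phi(\xi_i,X)=g(\xi_i,fX)=0$; hence the Cartan identity~\eqref{3.Ld} with $d\Phi=0$ gives $\pounds_{\xi_i}\Phi=0$. Plugging this into~\eqref{3.9A} and using $N^{(3)}_i=\pounds_{\xi_i}f$ (see~\eqref{2.8X}) yields $(\pounds_{\xi_i}g)(X,fY)=-g(X,N^{(3)}_i(Y))$. If $N^{(3)}_i=0$, then $\pounds_{\xi_i}g$ vanishes as soon as one of its arguments lies in $f(TM)$, while $(\pounds_{\xi_i}g)(\xi_k,\xi_j)=\xi_i\,g(\xi_k,\xi_j)-g([\xi_i,\xi_k],\xi_j)-g(\xi_k,[\xi_i,\xi_j])=0$ because $g(\xi_k,\xi_j)$ is constant and $[\xi_i,\xi_k]=[\xi_i,\xi_j]=0$; so $\pounds_{\xi_i}g=0$, i.e., $\xi_i$ is Killing. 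Conversely, if $\xi_i$ is Killing, then $(\pounds_{\xi_i}g)(X,fY)=0$ forces $g(X,N^{(3)}_i(Y))=0$ for all $X$, hence $N^{(3)}_i=0$.

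The one genuinely delicate point is the vanishing of $[\xi_i,\xi_j]$ and the totally geodesic property: this is the place where one must use \emph{both} hypotheses $d\eta^i=0$ and $d\Phi=0$ at the same time, together with the structural identities $f\xi_i=0$, $\eta^i\circ f=0$ and $f(TM)\perp\ker f$. Everything else is either trivial or repeats the argument of Theorem~\ref{C-K}.
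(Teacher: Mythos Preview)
Your proof is correct and follows the same approach as the paper's: reading off $N^{(2)}_i=N^{(4)}_{ij}=0$ and $N^{(1)}=[f,f]$ from $d\eta^i=0$, obtaining $[\xi_i,\xi_j]=0$ by evaluating $d\eta^k$ and $d\Phi$ on the characteristic vectors, and deducing the Killing equivalence from \eqref{3.9A} together with $\pounds_{\xi_i}\Phi=0$. You are in fact more thorough than the paper on two points it leaves implicit --- the Koszul computation showing $\nabla_{\xi_i}\xi_j\in\ker f$ (hence the foliation is totally geodesic), and the verification that $(\pounds_{\xi_i}g)(\xi_k,\xi_j)=0$ in the direction $N^{(3)}_i=0\Rightarrow\xi_i$ Killing.
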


\begin{proof}
By \eqref{2.7X} and \eqref{2.9X} and since $d\eta^i=0$, the tensors $N^{(2)}_i$ and $N^{(4)}_{ij}$ vanish on a weak almost ${\cal C}$-structure.
Moreover, by \eqref{2.6X} and \eqref{3.9}, respectively, the tensor $N^{(1)}$ coincides with $[f,f]$,
and $N^{(3)}_i=\pounds_{\xi_i}{f}\ (1\le i\le p)$ vanish if and only if each $\xi_i$ is a Killing~vector.
From the equalities
\begin{align*}
%\label{6.2}
 3\,d\Phi(X,\xi_i,\xi_j) & = g([\xi_i,\xi_j], fX),\\
 2\,d\eta^k(\xi_j, \xi_i) & = g([\xi_i,\xi_j],\xi_k)
\end{align*}
and conditions $d\Phi=0$ and $d\eta^i=0$ we obtain \eqref{6.1d}.
\end{proof}

We will express $\nabla_{X}{f}$ using a new tensor on a general metric weak $f$-structure.
The following assertion plays a key role in the paper and
generalizes \cite[Proposition~2.4]{BP-2008A}, see also \cite[p.~93]{DIP-2001}.

\begin{proposition}\label{lem6.1}
For a metric weak $f$-structure
%$({f},Q,\xi_i,\eta^i,g)$,
we get
\begin{align}\label{3.1}
 & 2\,g((\nabla_{X}{f})Y,Z) = 3\,d\Phi(X,{f} Y,{f} Z) - 3\, d\Phi(X,Y,Z) + g(N^{(1)}(Y,Z),{f} X)\notag\\
 & +\sum\nolimits_{\,i}\big( N^{(2)}_i(Y,Z)\,\eta^i(X) + 2\,d\eta^i({f} Y,X)\,\eta^i(Z) - 2\,d\eta^i({f} Z,X)\,\eta^i(Y)\big) \notag\\
 & + N^{(5)}(X,Y,Z),
\end{align}
where a skew-symmet\-ric with respect to $Y$ and $Z$ tensor $N^{(5)}(X,Y,Z)$ is defined by
\begin{align*}
 N^{(5)}(X,Y,Z) &= ({f} Z)\,(g(X^\top, \tilde QY)) -({f} Y)\,(g(X^\top, \tilde QZ)) \\
 & +\,g([X, {f} Z]^\top, \tilde QY) - g([X,{f} Y]^\top, \tilde QZ) \\
 & +\,g([Y,{f} Z]^\top -[Z, {f} Y]^\top - {f}[Y,Z],\ \tilde Q X).
\end{align*}
\end{proposition}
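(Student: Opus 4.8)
The plan is to mimic the classical computation behind the standard formula for $\nabla f$ (Blair's Proposition~2.4, \cite{BP-2008A}) but carefully tracking the ``defect'' tensor $\tilde Q = Q - \mathrm{id}_{TM}$, which is exactly what produces $N^{(5)}$. First I would start from the Koszul-type expression for $2\,g((\nabla_X f)Y,Z)$. Using $(\nabla_X f)Y = \nabla_X(fY) - f(\nabla_X Y)$ and skew-symmetry of $f$ from \eqref{E-Q2-g}, I write $g((\nabla_X f)Y,Z) = g(\nabla_X(fY),Z) + g(\nabla_X Y, fZ)$, and then expand both covariant derivatives with the Koszul formula \eqref{3.2}. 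This turns everything into Lie brackets and derivatives of inner products $g(\cdot,\cdot)$, and crucially the inner products $g(fY,fZ)$, $g(fY,\cdot)$, etc., get replaced using the compatibility relation \eqref{2.2}, which is where $Q$ (hence $\tilde Q$) enters. Terms of the form $g(fX,fW) = g(X, QW) - \sum_i \eta^i(X)\eta^i(QW)$ split as $g(X,W) + g(X,\tilde Q W) - (\text{$\eta$-terms})$; the $g(X,W)$ pieces reproduce exactly the $Q = \mathrm{id}$ computation, the $\eta$-terms feed into the $d\eta^i$ summands in \eqref{3.1}, and the residual $g(X,\tilde Q W)$ pieces must be collected into $N^{(5)}$.

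Next I would organize the bookkeeping by grouping the output of the Koszul expansion into: (i) the ``classical part'' — all terms that would appear if $Q = \mathrm{id}$ — which by Blair's identity already sums to $3\,d\Phi(X,fY,fZ) - 3\,d\Phi(X,Y,Z) + g([f,f](Y,Z),fX) + \sum_i N^{(2)}_i(Y,Z)\eta^i(X) + \sum_i\big(2\,d\eta^i(fY,X)\eta^i(Z) - 2\,d\eta^i(fZ,X)\eta^i(Y)\big)$, after re-inserting the $2\sum_i d\eta^i\otimes\xi_i$ correction to pass from $[f,f]$ to $N^{(1)}$; and (ii) the ``correction part'' — everything carrying a factor of $\tilde Q$. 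To recognize part (i) I would invoke the $Q = \mathrm{id}$ case directly as a black box (it is \cite[Proposition~2.4]{BP-2008A}), applied formally to the identity tensor; this is legitimate because the algebraic manipulation of the Koszul formula is linear in the compatibility relation. For part (ii), I would use $[\tilde Q, f] = 0$ and $Q\xi_i = \xi_i$ (so $\tilde Q\xi_i = 0$, hence $\tilde Q X = \tilde Q X^\top$ and the $\top$-projections in $N^{(5)}$ are harmless) to simplify, and the derivative-of-inner-product terms $Z\,g(X,\tilde QY)$, etc., together with the bracket terms $g([X,fZ],\tilde QY)$, $g(f[Y,Z],\tilde QX)$, will reassemble into precisely the stated $N^{(5)}(X,Y,Z)$.

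The main obstacle — and the only genuinely delicate point — is the combinatorics of matching terms: the Koszul expansion of $2\,g(\nabla_X(fY),Z) + 2\,g(\nabla_X Y, fZ)$ produces roughly a dozen bracket/derivative terms, and each must be split via \eqref{2.2} into a $g(X,W)$-part, an $\eta$-part, and a $\tilde Q$-part, then routed to the correct destination. I expect the $\tilde Q$-part to require care with signs and with which slot ($X$, $Y$, or $Z$) the $\tilde Q$ lands in; in particular the term $g(f[Y,Z],\tilde QX)$ in $N^{(5)}$ comes from the $f(\nabla_X Y)$ piece paired against $fZ$ via \eqref{2.2}, combined across the $Y\leftrightarrow Z$ structure, and getting the antisymmetrization $g([Y,fZ]^\top - [Z,fY]^\top - f[Y,Z], \tilde QX)$ right is where I would be most careful. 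A useful consistency check at the end: setting $\tilde Q = 0$ must collapse \eqref{3.1} to Blair's formula, and setting $Y$ or $Z$ equal to some $\xi_i$ should be compatible with $f\xi_i = 0$ and the relations in Proposition~1.1; I would verify both before declaring the identity proved.
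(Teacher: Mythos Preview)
Your proposal is correct and follows essentially the same route as the paper: start from $2\,g((\nabla_X f)Y,Z)=2\,g(\nabla_X(fY),Z)+2\,g(\nabla_X Y,fZ)$, expand both terms via the Koszul formula \eqref{3.2}, then repeatedly substitute the compatibility relation \eqref{2.2} (the paper rewrites it as $g(X,Z)=\Phi(fX,Z)+\sum_i\eta^i(X)\eta^i(Z)-g(X^\top,\tilde QZ)$ and applies it six times) and regroup into the $d\Phi$, $N^{(1)}$, $N^{(2)}_i$, $d\eta^i$, and $\tilde Q$ pieces. The only difference is organizational: you propose to peel off the $Q=\mathrm{id}$ case as a black box and track only the $\tilde Q$-residue, whereas the paper carries out the full expansion in one pass; both yield the same identity and the same bookkeeping burden lives exactly where you flagged it.
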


\begin{proof}
Using \eqref{3.2} and the skew-symmetry of ${f}$, one can compute
\begin{align}\label{3.4}
2\,g((\nabla_{X}{f})Y,Z) =\,& 2\,g(\nabla_{X}({f} Y),Z) + 2\,g( \nabla_{X}Y,{f} Z) \notag\\
=\,& X\,g({f} Y,Z) + ({f} Y)\,g(X,Z) - Z\,g(X,{f} Y) \notag\\
& +\,g([X,{f} Y],Z) +g([Z,X],{f} Y) - g([{f} Y,Z],X) \notag\\
& +\,X\,g(Y,{f} Z) + Y\,g(X,{f} Z) - ({f} Z)\,g(X,Y) \notag\\
& +\,g([X,Y],{f} Z) + g([{f} Z,X],Y) - g([Y,{f} Z],X).
\end{align}
Using \eqref{2.2}, we obtain
\begin{align}\label{XZ}
\notag
 g(X,Z) &= \Phi({f} X, Z) -g(X,\tilde Q Z) +\sum\nolimits_{\,i}\big(\eta^i(X)\,\eta^i(Z) +\eta^i(X)\,\eta^i(\tilde Q Z)\big)\\
 &= \Phi({f} X, Z) + \sum\nolimits_{\,i}\eta^i(X)\,\eta^i(Z)  - g(X^\top, \tilde QZ).
\end{align}
Thus, and in view of the skew-symmetry of ${f}$ and applying \eqref{XZ} six times, \eqref{3.4} can be written as
\begin{align*}
%\label{3.5}
& 2\,g((\nabla_{X}{f})Y,Z) = X\,\Phi(Y, Z) \\
& +({f} Y)\,\big(\Phi({f} X, {Z})+\sum\nolimits_{\,i}\eta^i(X)\,\eta^i(Z) \big) - ({f} Y)\,g(X^\top,\tilde QZ) - Z\,\Phi(X,Y) \\
& -\Phi([X,{f} Y],{f} {Z}) + \sum\nolimits_{\,i}\eta^i([X,{f} Y])\eta^i(Z) - g([X,{f} Y]^\top,\tilde QZ) +\Phi([Z,X],Y) \notag\\
& +\Phi([{f} Y,Z],{f} {X}) - \sum\nolimits_{\,i}\eta^i([{f} Y,Z])\,\eta^i(X) + g([{f} Y, Z]^\top, \tilde QX) + X\,\Phi(Y,Z) \\
& +Y\,\Phi(X,Z) - ({f} Z)\,\big(\Phi({f} X, {Y}) + \sum\nolimits_{\,i}\eta^i(X)\,\eta^i(Y)\big) + ({f} Z) g(X^\top, \tilde QY) \\
& +\Phi([X,Y],Z) + g({f}[{f} Z,X],{f} {Y}) + \sum\nolimits_{\,i}\eta^i([{f} Z,X])\eta^i(Y) - g([{f} Z,X]^\top,\tilde QY)\\
& -g({f}[Y,{f} Z],{f} {X}) - \sum\nolimits_{\,i}\eta^i([Y,{f} Z])\,\eta^i(X) + g([Y,{f} Z]^\top, \tilde QX) .
\end{align*}
We also have
\begin{eqnarray*}
%\label{Eq-N1}\nonumber
  g(N^{(1)}(Y,Z),{f} X) = g({f}^2 [Y,Z] + [{f} Y, {f} Z] - {f}[{f} Y,Z] - {f}[Y,{f} Z], {f} X)\\
  = g({f}[Y,Z], \tilde Q X) + g([{f} Y, {f} Z] - {f}[{f} Y,Z] - {f}[Y,{f} Z] - [Y,Z], {f} X).
\end{eqnarray*}
From this and \eqref{3.3} we get the required result.
\end{proof}

\begin{remark}\rm
%The new tensor $N^{(5)}$ naturally supplements the traditional sequence of tensors \eqref{2.6X} and \eqref{2.7X}--\eqref{2.9X}.
For particular values of the tensor $N^{(5)}$ we get
\begin{align}\label{KK}
\nonumber
 N^{(5)}(X,\xi_i,Z) & = -N^{(5)}(X, Z, \xi_i) = g( (\pounds_{\xi_i}{f})(Z)^\top,\, \tilde Q X),\\
%\label{KK2}
\nonumber
 N^{(5)}(\xi_i,Y,Z) &= g([\xi_i, {f} Z]^\top, \tilde QY) -g([\xi_i,{f} Y]^\top, \tilde QZ),\\
 N^{(5)}(\xi_i,\xi_j,Y) &= N^{(5)}(\xi_i,Y,\xi_j)=0.
\end{align}
\end{remark}

We will discuss the meaning of $\nabla_{X}{f}$ for weak almost ${\cal S}$- and weak $K$- structures.
The following corollary of Proposition~\ref{lem6.1} and Theorem~\ref{thm6.2}
generalizes well-known results with $Q={\rm id}_{\,TM}$, e.g.,
\cite[Proposition 1.4]{b1970}, \cite[Proposition~3.3]{BP-2008A} and \cite[Proposition~2.1]{DIP-2001}.

\begin{corollary}\label{cor3.1}
For a weak almost ${\cal S}$-structure
%$({f},Q,\xi_i,\eta^i,g)$
we get
\begin{align}\label{3.1A}
\nonumber
 2\,g((\nabla_{X}{f})Y,Z) & = g(N^{(1)}(Y,Z),{f} X) +2\,g(fX,fY)\,\bar\eta(Z) -2\,g(fX,fZ)\,\bar\eta(Y) \\
 & + N^{(5)}(X,Y,Z),
% \quad 1\le i\le p,
\end{align}
where $\bar\eta=\sum\nolimits_{\,i}\eta^i$.
In particular, taking $x=\xi_i$ and then $Y=\xi_j$, we get
\begin{align}\label{3.1AA}
%\nonumber
 2\,g((\nabla_{\xi_i}{f})Y,Z) &= N^{(5)}(\xi_i,Y,Z) ,\quad 1\le i\le p,\\
\label{2-xi}
 \nabla_{\xi_i}\,\xi_j & = 0,\qquad 1\le i,j\le p.
\end{align}
By \eqref{2-xi}, the characteristic distribution on a weak almost ${\cal S}$-manifold
is tangent to a totally geodesic foliation with zero sectional curvature: $K(\xi_i,\xi_j)=0$.
\end{corollary}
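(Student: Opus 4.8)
The plan is to read everything off the general identity \eqref{3.1} of Proposition~\ref{lem6.1} by specializing it to a weak almost ${\cal S}$-structure, where \eqref{2.3} holds, and then to propagate the consequences through the two substitutions indicated in the statement. First I would establish \eqref{3.1A}. On a weak almost ${\cal S}$-manifold condition \eqref{2.3} says $\Phi = d\eta^i$, so $\Phi$ is exact and $d\Phi = 0$; hence the two $d\Phi$-terms in \eqref{3.1} drop out. By Theorem~\ref{thm6.2} the tensors $N^{(2)}_i$ vanish, so the term $N^{(2)}_i(Y,Z)\,\eta^i(X)$ disappears. In the two surviving sum-terms I substitute $d\eta^i = \Phi$ and use $\Phi(fY,X) = g(fY, fX) = g(fX, fY)$ and $\Phi(fZ,X) = g(fX, fZ)$; summing over $i$ converts $\sum_i \eta^i$ into $\bar\eta$, and \eqref{3.1A} results. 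Putting $X = \xi_i$ in \eqref{3.1A} and using $f\xi_i = 0$ annihilates $g(N^{(1)}(Y,Z), f\xi_i)$ (note that normality is \emph{not} assumed, but $f\xi_i = 0$ is enough) as well as the two $\bar\eta$-terms, which leaves \eqref{3.1AA}.

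Next I would prove \eqref{2-xi}. Setting $Y = \xi_j$ in \eqref{3.1AA} and using $N^{(5)}(\xi_i,\xi_j,Z) = 0$ from the Remark following Proposition~\ref{lem6.1}, I get $(\nabla_{\xi_i}f)\xi_j = 0$; since $f\xi_j = 0$ this means $f(\nabla_{\xi_i}\xi_j) = 0$, i.e. $\nabla_{\xi_i}\xi_j \in \ker f$. To control the $\ker f$-component I note that, because $d\eta^j = \Phi$ and $\iota_{\xi_i}\Phi = g(\xi_i, f\,\cdot) = 0$, the Cartan identity \eqref{3.Ld} together with $\eta^j(\xi_i)=\delta^j_i$ gives $\pounds_{\xi_i}\eta^j = 0$; by \eqref{3.3C} this yields $\eta^j([\xi_i,\xi_k]) = g([\xi_i,\xi_k],\xi_j) = 0$ for all $i,j,k$. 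Plugging this into the Koszul formula \eqref{3.2} for $2\,g(\nabla_{\xi_i}\xi_j,\xi_k)$ — where the three metric-derivative terms are derivatives of the constants $g(\xi_a,\xi_b)=\delta_{ab}$ — gives $g(\nabla_{\xi_i}\xi_j,\xi_k) = 0$ for every $k$. Combined with $\nabla_{\xi_i}\xi_j \in \ker f$ and the fact that $\xi_1,\dots,\xi_p$ span $\ker f$, this forces $\nabla_{\xi_i}\xi_j = 0$.

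Finally, from \eqref{2-xi} we get $[\xi_i,\xi_j] = \nabla_{\xi_i}\xi_j - \nabla_{\xi_j}\xi_i = 0 \in \ker f$, so $\ker f$ is integrable, and its second fundamental form, which equals $\frac12(\nabla_{\xi_i}\xi_j + \nabla_{\xi_j}\xi_i)$ on the frame $\{\xi_i\}$, vanishes, so $\ker f$ defines a totally geodesic foliation; moreover, for the curvature tensor $R$, one has $R(\xi_i,\xi_j)\xi_j = \nabla_{\xi_i}\nabla_{\xi_j}\xi_j - \nabla_{\xi_j}\nabla_{\xi_i}\xi_j - \nabla_{[\xi_i,\xi_j]}\xi_j = 0$, hence $K(\xi_i,\xi_j) = g(R(\xi_i,\xi_j)\xi_j,\xi_i) = 0$. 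The reduction of \eqref{3.1} is routine once one observes that $d\Phi = 0$; the only step that is not automatic — and where, in contrast to Proposition~\ref{thm6.1}, normality is unavailable — is upgrading $\nabla_{\xi_i}\xi_j \in \ker f$ to $\nabla_{\xi_i}\xi_j = 0$, which is precisely where the ${\cal S}$-condition $d\eta^j = \Phi$ (through $\pounds_{\xi_i}\eta^j = 0$) is used.
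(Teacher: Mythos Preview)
Your proof is correct and follows essentially the same approach as the paper: both derive \eqref{3.1A} by feeding $d\Phi=0$, $N^{(2)}_i=0$ and $d\eta^i=\Phi$ from Theorem~\ref{thm6.2} into \eqref{3.1}, obtain \eqref{3.1AA} by setting $X=\xi_i$ and using $f\xi_i=0$, and then show $\nabla_{\xi_i}\xi_j\in\ker f$ via $N^{(5)}(\xi_i,\xi_j,\cdot)=0$. For the last step the paper computes $\eta^k([\xi_i,\xi_j])=-2\,d\eta^k(\xi_i,\xi_j)=0$ directly and then uses metric compatibility together with the resulting symmetry $\nabla_{\xi_i}\xi_j=\nabla_{\xi_j}\xi_i$, whereas you reach the same vanishing brackets through $\pounds_{\xi_i}\eta^j=0$ and then invoke Koszul; this is only a cosmetic difference.
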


\begin{proof}
%We show \eqref{2-xi}.
According to Theorem~\ref{thm6.2}, for a weak almost ${\cal S}$-structure we have
\[
 d\eta^i = \Phi,\quad N^{(2)}_i= N^{(4)}_{ij}=0.
\]
Thus, invoking \eqref{2.3} and using Theorem~\ref{thm6.2} in \eqref{3.1}, we get \eqref{3.1A}.

From \eqref{3.1AA} with $Y=\xi_j$ we get $g(f\nabla_{\xi_i}\,\xi_j, Z)=0$,
thus $\nabla_{\xi_i}\,\xi_j\in\ker f$.~Also,
\[
 \eta^k([\xi_i,\xi_j])= -2\,d\eta^k(\xi_i,\xi_j) =-2\,g(\xi_i, f\xi_j)=0;
\]
hence, $[\xi_i,\xi_j]=0$, i.e., $\nabla_{\xi_i}\,\xi_j=\nabla_{\xi_j}\,\xi_i$.
Finally, from $g(\xi_j,\xi_k)=\delta_{jk}$, using the covariant derivative with respect to $\xi_i$ and the above equality,
we get $\nabla_{\xi_i}\,\xi_j\in f(TM)$. This together with $\nabla_{\xi_i}\,\xi_j\in\ker f$ completes the proof of \eqref{2-xi}.
\end{proof}

The next corollary of Propositions~\ref{thm6.1} and \ref{lem6.1} generalizes \cite[Proposition 1.3]{b1970} with $Q={\rm id}_{\,TM}$.

\begin{corollary}
%\label{cor3.1K}
For a weak $K$-structure the covariant derivative of ${f}$ is given by
\begin{align}\label{3.1K}
\nonumber
 2\,g((\nabla_{X}{f})Y,Z) & =
 \sum\nolimits_{\,i}\big(  2\,d\eta^i({f} Y,X)\,\eta^i(Z) - 2\,d\eta^i({f} Z,X)\,\eta^i(Y) \\
 & +\eta^i([\tilde QY^\top,\,{f} Z])\,\eta^i(X) \big)
 + N^{(5)}(X,Y,Z).
%,\quad 1\le i\le p,
\end{align}
In particular,
 $2\,g((\nabla_{\xi_i}{f})Y,Z) = \eta^i([\tilde QY^\top,\,{f} Z]) + N^{(5)}(\xi_i,Y,Z)$ for $1\le i\le p$.
\end{corollary}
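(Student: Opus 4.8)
The plan is to specialize the master identity \eqref{3.1} of Proposition~\ref{lem6.1} to a weak $K$-structure, exploiting that such a structure is by definition normal (so that $N^{(1)}\equiv 0$ and Proposition~\ref{thm6.1} is available) and satisfies $d\Phi=0$. Thus I expect the proof to be a short, purely substitutional argument; the only real work is to keep track of which of the groups of terms on the right-hand side of \eqref{3.1} survive.

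First I would dispose of the ``curvature-type'' terms: since $d\Phi=0$, both $3\,d\Phi(X,{f}Y,{f}Z)$ and $3\,d\Phi(X,Y,Z)$ drop out, and since normality means $N^{(1)}\equiv 0$ (see \eqref{2.6X}), the term $g(N^{(1)}(Y,Z),{f}X)$ also vanishes. What remains is $\sum_i\big(N^{(2)}_i(Y,Z)\,\eta^i(X)+2\,d\eta^i({f}Y,X)\,\eta^i(Z)-2\,d\eta^i({f}Z,X)\,\eta^i(Y)\big)$ together with $N^{(5)}(X,Y,Z)$. Here I would invoke Proposition~\ref{thm6.1}, which applies because the structure is normal: it gives $N^{(2)}_i(Y,Z)=\eta^i([\tilde QY^\top,\,{f}Z])$. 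Substituting this into the surviving block yields precisely \eqref{3.1K}. I should note that no further simplification of the $d\eta^i({f}Y,X)$ terms is available: on a weak $K$-manifold the forms $d\eta^i$ are not tied to $\Phi$, only $\iota_{\,\xi_i}d\eta^j=0$ holds, so those terms genuinely persist, consistently with the stated formula.

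For the ``in particular'' clause I would set $X=\xi_i$ in \eqref{3.1K}, first renaming the summation index to $j$ to avoid the index clash. Since the structure is normal, \eqref{3.11A} gives $\iota_{\,\xi_i}d\eta^j=0$, hence $d\eta^j({f}Y,\xi_i)=d\eta^j({f}Z,\xi_i)=0$ for all $j$, so the first two families of terms disappear; and $\eta^j(\xi_i)=\delta^j_i$ collapses the last sum to the single term $\eta^i([\tilde QY^\top,\,{f}Z])$. This leaves $2\,g((\nabla_{\xi_i}{f})Y,Z)=\eta^i([\tilde QY^\top,\,{f}Z])+N^{(5)}(\xi_i,Y,Z)$, as claimed; if one wishes, $N^{(5)}(\xi_i,Y,Z)$ can be rewritten via \eqref{KK}.

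I do not anticipate a genuine obstacle: the substance lies entirely in Propositions~\ref{lem6.1} and~\ref{thm6.1}, and the corollary is a bookkeeping exercise. The one point demanding care is to confirm that Proposition~\ref{thm6.1} is legitimately invoked — normality is part of the definition of a weak $K$-structure — and that it is the closedness of $\Phi$, not merely of the $\eta^i$, that is being used to kill the $d\Phi$-terms.
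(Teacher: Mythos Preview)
Your proposal is correct and follows exactly the paper's approach: the paper's proof is the single line ``This follows directly from \eqref{3.1} and \eqref{3.1KK},'' and you have simply unpacked this by noting that $d\Phi=0$ and $N^{(1)}=0$ kill the first three terms of \eqref{3.1} while \eqref{3.1KK} handles $N^{(2)}_i$. Your treatment of the ``in particular'' clause via \eqref{3.11A} and $\eta^j(\xi_i)=\delta^j_i$ is likewise the intended (and only reasonable) reading.
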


\begin{proof}
 This follows directly from \eqref{3.1} and \eqref{3.1KK}.
\end{proof}

\begin{remark}\rm
We can rewrite \eqref{3.1}, \eqref{3.1A} and \eqref{3.1K} in terms of~$\Phi$
using the equality
\begin{equation}\label{E-nabla-Phi}
 (\nabla_{X}\Phi)(Z,Y)=g((\nabla_{X}{f})Y,Z)\,.
\end{equation}
\end{remark}

\section{The tensor field $h$}
\label{sec:3a}

Here, we apply for a weak almost ${\cal S}$-manifold the tensor field $h=(h_1,\ldots,h_p)$, where
\begin{align*}
%\label{4.1}
 h_i=\frac{1}{2}\, N^{(3)}_i = \frac{1}{2}\,\pounds_{\xi_i}{f} .
\end{align*}
By Theorem~\ref{thm6.2}, $h_i=0$ if and only if $\xi_i$ is a Killing field.
First, we calculate
\begin{align}\label{4.2}
\nonumber
 (\pounds_{\xi_i}{f})X &\overset{\eqref{3.3B}} = \nabla_{\xi_i}({f} X) - \nabla_{{f} X}\,\xi_i - {f}(\nabla_{\xi_i}X - \nabla_{X}\,\xi_i)\notag\\
 &= (\nabla_{\xi_i}{f})X - \nabla_{{f} X}\,\xi_i + {f}\nabla_X\,\xi_i.
\end{align}
Taking $X=\xi_i$ in \eqref{4.2} and using $\nabla_{\xi_i}\,\xi_j=0$ (see Corollary~\ref{cor3.1}) and $(\nabla_{\xi_i}{f})\,\xi_j=\frac12 N^{(5)}(\xi_i,\xi_j,\,\cdot)=0$, see \eqref{3.1AA}, we get
\begin{align}\label{4.2b}
 h_i\,\xi_j = 0.
\end{align}

The next lemma proves stability of the results of \cite[Proposition~3.4]{BP-2008A} and \cite[Proposition~2.2]{DIP-2001}
that for an almost ${\cal S}$-structure, each tensor $h_i$ is self-adjoint and anticommutes with ${f}$.

\begin{proposition}\label{L3.1}
For a weak almost ${\cal S}$-structure, the tensor $h_i$ and its conjugate $h_i^*$ satisfy
\begin{eqnarray}
\label{E-31}
 (h_i-h_i^*)X &=& \frac{1}{2}\,N^{(5)}(\xi_i, X, \,\cdot),\\
 \label{E-30b}
 g(Q\,\nabla_{X}\,\xi_i, Z) &=& g(({f}+h_i{f}) Z,QX) - \frac 12\,N^{(5)}(X,\xi_i, {f}Z), \\
 \label{E-31A}
 h_i{f}+{f}\, h_i &=& -\frac12\,\pounds_{\xi_i}{\tilde Q} .
\end{eqnarray}
\end{proposition}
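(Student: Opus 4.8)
The plan is to derive all three identities from the formula \eqref{4.2}, written in the form
$2h_i X=(\nabla_{\xi_i}f)X-\nabla_{f X}\xi_i+f\nabla_X\xi_i$,
together with the properties of a weak almost ${\cal S}$-structure already established: by Corollary~\ref{cor3.1} and Theorem~\ref{thm6.2} one has $\nabla_{\xi_i}\xi_j=0$, $[\xi_i,\xi_j]=0$, $\pounds_{\xi_i}\eta^j=0$ and $2\,g((\nabla_{\xi_i}f)Y,Z)=N^{(5)}(\xi_i,Y,Z)$, see \eqref{3.1AA}; each $\nabla_X f$ is $g$-skew-symmetric (being $\nabla$ of a skew-symmetric tensor for a metric connection); and, since $\Phi=d\eta^i$ (hence $d\Phi=0$, and $\iota_{\xi_i}\Phi=0$ because $N^{(4)}_{ij}=0$), Cartan's formula \eqref{3.Ld} gives $\pounds_{\xi_i}\Phi=0$, so \eqref{3.9A} yields the auxiliary identity $(\pounds_{\xi_i}g)(f X,Z)=-2\,g(h_i X,Z)$, equivalently $S_i(fX,Z)=-g(h_iX,Z)$ for the symmetric part $S_i:=\tfrac12\pounds_{\xi_i}g$. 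I would also record the preliminary fact that $\nabla_X\xi_i$ and $h_i X$ lie in $f(TM)$ (using $d\eta^i=\Phi$, $\nabla_{\xi_i}\xi_j=0$, $N^{(4)}_{ij}=0$ and $\eta^i\circ f=0$), so that $f^2\nabla_X\xi_i=-Q\,\nabla_X\xi_i$ and $(h_i W)^\top=h_i W$; the latter lets \eqref{KK} be read as $\tfrac12 N^{(5)}(X,\xi_i,W)=g(h_i W,\tilde Q X)$.

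I would treat \eqref{E-31A} first, since it is immediate: the Lie derivative is a derivation for composition of $(1,1)$-tensors, so $\pounds_{\xi_i}(f^2)=(\pounds_{\xi_i}f)f+f(\pounds_{\xi_i}f)=2(h_i f+f h_i)$, while applying $\pounds_{\xi_i}$ to $f^2=-Q+\sum_j\eta^j\otimes\xi_j$ and using $\pounds_{\xi_i}\eta^j=0$ and $\pounds_{\xi_i}\xi_j=[\xi_i,\xi_j]=0$ gives $\pounds_{\xi_i}(f^2)=-\pounds_{\xi_i}Q=-\pounds_{\xi_i}\tilde Q$; equating the two expressions proves \eqref{E-31A}.

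For \eqref{E-31} I would compute $g((h_i-h_i^*)X,Z)=g(h_i X,Z)-g(h_i Z,X)$ from the displayed form of $2h_i$. The $(\nabla_{\xi_i}f)$-contribution equals $2\,g((\nabla_{\xi_i}f)X,Z)=N^{(5)}(\xi_i,X,Z)$ by skew-symmetry of $\nabla f$ and \eqref{3.1AA}. In the two remaining contributions I would split $g(\nabla_U\xi_i,V)$ into its skew part $\Phi(U,V)=g(U,f V)$ (here $d\eta^i=\Phi$ is used) and its symmetric part $S_i(U,V)$. The $\Phi$-terms cancel: $\Phi(f X,Z)=\Phi(f Z,X)$ (both equal $g(f X,f Z)$) and $\Phi(X,f Z)=\Phi(Z,f X)$ (from $f^2=-Q+\sum\eta^j\otimes\xi_j$ with $Q$ self-adjoint); the $S_i$-terms cancel by the symmetry of $S_i$. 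What remains is $2\,g((h_i-h_i^*)X,Z)=N^{(5)}(\xi_i,X,Z)$, which is \eqref{E-31}.

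Finally, for \eqref{E-30b} I would compose the formula for $2h_i$ with $f$ and substitute $f^2\nabla_X\xi_i=-Q\nabla_X\xi_i$ to get $Q\nabla_X\xi_i=f(\nabla_{\xi_i}f)X-f\nabla_{f X}\xi_i-2f h_i X$, then pair with $Z$. Here $g(f(\nabla_{\xi_i}f)X,Z)=-\tfrac12 N^{(5)}(\xi_i,X,f Z)$ by \eqref{3.1AA}; the middle term is $-g(f\nabla_{f X}\xi_i,Z)=\Phi(f X,f Z)+S_i(f X,f Z)=g(f Z,QX)-g(h_i X,f Z)$, using $[Q,f]=0$, $Q$ self-adjoint and the auxiliary identity $S_i(f X,Z)=-g(h_i X,Z)$; and $-2\,g(f h_i X,Z)=2\,g(h_i X,f Z)$. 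Adding these, then rewriting $\tfrac12 N^{(5)}(X,\xi_i,f Z)=g(h_i f Z,\tilde Q X)$ via \eqref{KK} and using the already proved \eqref{E-31} to convert $g(h_i f Z,X)=g(f Z,h_i^* X)$, one checks that both sides of \eqref{E-30b} reduce to $g(f Z,QX)+g(h_i f Z,X)$. The main obstacle I anticipate is precisely this last bookkeeping — keeping the arguments of $N^{(5)}$ in the right slots so that \eqref{3.1AA} applies in the $(\xi_i,\cdot,\cdot)$ position and \eqref{KK} in the $(\cdot,\xi_i,\cdot)$ position, and converting cleanly among $Q$, $\tilde Q=Q-\mathrm{id}$, $h_i$ and $h_i^*$; the one genuinely new input, the identity $S_i(f X,Z)=-g(h_i X,Z)$ extracted from \eqref{3.9A} and $\pounds_{\xi_i}\Phi=0$, is what makes the $h_i^*$-free form \eqref{E-30b} come out.
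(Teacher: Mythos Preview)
Your argument is correct and in several places more economical than the paper's. For \eqref{E-31A} the paper expands $\nabla_{\xi_i}(f^2)=f\nabla_{\xi_i}f+(\nabla_{\xi_i}f)f$, substitutes \eqref{4.2} for each $\nabla_{\xi_i}f$, and then has to invoke \eqref{2-xi} and \eqref{E-30-xi} to kill the extra terms; your direct use of the derivation property of $\pounds_{\xi_i}$ on $f^2=-Q+\sum_j\eta^j\otimes\xi_j$, together with $\pounds_{\xi_i}\eta^j=0$ and $[\xi_i,\xi_j]=0$, bypasses all of that. For \eqref{E-31} the paper subtracts \eqref{4.3} and \eqref{4.3b} and recognizes the residual term as $N^{(2)}_i$, which vanishes by Theorem~\ref{thm6.2}; your symmetric/skew decomposition $g(\nabla_U\xi_i,V)=\Phi(U,V)+S_i(U,V)$ achieves the same cancellation without naming $N^{(2)}_i$. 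The biggest divergence is \eqref{E-30b}: the paper starts from the general formula \eqref{3.1A} with $Y=\xi_i$ (yielding \eqref{4.4}), computes $g(N^{(1)}(\xi_i,Z),fX)$ from the Nijenhuis tensor (\eqref{4.4A}--\eqref{4.5}), combines to get \eqref{4.6}, and only then substitutes $Z\mapsto fZ$; you instead post-compose $2h_i=(\nabla_{\xi_i}f)-\nabla_{f\,\cdot}\xi_i+f\nabla_{\cdot}\xi_i$ with $f$ and feed in the auxiliary identity $S_i(fX,Z)=-g(h_iX,Z)$ obtained from $\pounds_{\xi_i}\Phi=0$ and \eqref{3.9A}. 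Both routes need the same preliminary facts ($\nabla_X\xi_i\in f(TM)$, $h_iX\in f(TM)$, \eqref{KK}, \eqref{3.1AA}), but your use of $\pounds_{\xi_i}\Phi=0$ replaces the paper's detour through $N^{(1)}$ and makes the final bookkeeping (closing the loop via the already-proved \eqref{E-31}) quite transparent.
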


\begin{proof}
(i) The scalar product of \eqref{4.2} with $Y$ for $X,Y\in f(TM)$, using \eqref{3.1AA}, gives
\begin{align}\label{4.3}
 g((\pounds_{\xi_i}{f})X,Y) &= N^{(5)}(\xi_i, X, Y) + g({f}\nabla_{X}\,\xi_i - \nabla_{{f} X}\,\xi_i,\ Y).
\end{align}
Similarly,
\begin{align}\label{4.3b}
 g((\pounds_{\xi_i}{f})Y,X) &=
 N^{(5)}(\xi_i, Y, X) +g({f}\nabla_{Y}\,\xi_i - \nabla_{{f} Y}\,\xi_i,\ X).
\end{align}
Using \eqref{2.7X} and $(fX)(\eta^i(Y)) -(fY)(\eta^i(X))\equiv0$
(this vanishes if either $X$ or $Y$ equals $\xi_j$ and also for $X$ and $Y$ in~$f(TM)$), we get
\[
 N^{(2)}_i (X,Y) = \eta^i([f Y, X]-[f X, Y]) .
\]
Thus, the difference of \eqref{4.3} and \eqref{4.3b} gives
\begin{align*}
 2\,g((h_i-h_i^*)X,Y) =  N^{(5)}(\xi_i, X, Y) - N^{(2)}_i (X,Y).
\end{align*}
From this and equality $N^{(2)}_i=0$ (see Theorem~\ref{thm6.2}) we get \eqref{E-31}.
%%%%%%%%%%%%%%%%%%%%%%%%%%%%%%%%%%%%%%%%%%%%%%%%%

(ii) From Corollary \ref{cor3.1} with $Y=\xi_i$, we find
\begin{align}\label{4.4}
 2\,g((\nabla_{X}{f})\xi_i,Z) &= g(N^{(1)}(\xi_i,Z),{f} X)
% - 2\,d\eta^i({f} Z,X)
 - 2\,g({f} Z , {f} X)
 + N^{(5)}(X,\xi_i,Z).
\end{align}
From \eqref{2.5} with $Y=\xi_i$, we get
\begin{align}\label{2.5B}
 [{f},{f}](X,\xi_i) = {f}^2 [X,\xi_i] - {f}[{f} X,\xi_i] .
\end{align}
Using \eqref{2.2}, \eqref{2.5B} and \eqref{3.3B}, we calculate
\begin{align}
\label{4.4A}
 g([{f},{f}](\xi_i,Z),{f} X)&= g({f}^2\,[\xi_i,Z] - {f}[\xi_i,{f} Z],{f} X) = - g({f}(\pounds_{\xi_i}{f})Z,{f} X)\notag\\
 &= - g((\pounds_{\xi_i}{f})Z,QX) +\sum\nolimits_{\,j}\eta^j(X)\,\eta^j((\pounds_{\xi_i}{f})Z) .
 \end{align}
%%%%%%%%%%%%%%%%%%For a weak almost para-${\cal S}$-structure,
Using \eqref{4.2}, we get
\begin{align}\label{3.1A3}
 2\,g((\nabla_{\xi_i}{f})Y,\xi_j) \overset{\eqref{3.1AA}}= N^{(5)}(\xi_i,Y,\xi_j) \overset{\eqref{KK}}=0,
\end{align}
%%%%%%%%%%%%%%%%%%%%%%% MOVED
From \eqref{4.2} and \eqref{3.1A3} we get
\begin{align}\label{4.5}
 g((\pounds_{\xi_i}{f})X,\xi_j) = -g(\nabla_{{f} X}\,\xi_i,\xi_j) .
\end{align}
Since ${f}\,\xi_i=0$, we find
\begin{align}\label{4.5A}
 (\nabla_{X}{f})\,\xi_i = -{f}\,\nabla_{X}\,\xi_i.
\end{align}
Thus, combining \eqref{4.4}, \eqref{4.4A} and \eqref{4.5}, we deduce
\begin{align}\label{4.6}
 -g({f}\,\nabla_{X}\,\xi_i, Z) & = -g(h_iZ,QX)
 -g(X,QZ) + \frac 12\,N^{(5)}(X,\xi_i,Z) \notag\\
 & + \sum\nolimits_{\,j}\eta^j(X)\,\eta^j(Z) -\frac12\sum\nolimits_{\,j}\eta^j(X)\,g(\nabla_{fZ}\,\xi_i, \xi_j).
\end{align}
From \eqref{2.3} we have
$g([X,\xi_i], \xi_k) = 2\,d\eta^k(\xi_i,X)=2\,\Phi(\xi_i,X)=0$.
By \eqref{2-xi}, we get
$g(\nabla_X\,\xi_i, \xi_k) = g(\nabla_{\xi_i}X, \xi_k) = -g(\nabla_{\xi_i}\xi_k, X) = 0$ for $X\in f(TM)$,
thus
%\eqref{E-30} with $Z=\xi_k$ we get $g(\nabla_{X}\,\xi_i,\ \xi_k) = -g(\nabla_{X}\,\xi_i,\ \xi_k)$, thus
%\eqref{E-30-xi}.
%moreover, from \eqref{E-30b} with $Z=\xi_k$
%we get
\begin{align}\label{E-30-xi}
  g(\nabla_{X}\,\xi_i,\ \xi_k) = 0,\quad X\in TM,\ 1\le i,k \le p .
\end{align}
Replacing $Z$ by ${f} Z$ in \eqref{4.6} and using \eqref{2.1}
%, \eqref{E-30-xi}
and ${f}\,\xi_i=0$, we achieve
\begin{eqnarray}\label{E-30}
\nonumber
 && g(Q\,\nabla_{X}\,\xi_i, Z) = g(({f}+h_i{f}) Z,QX) - \frac 12\,N^{(5)}(X,\xi_i, {f}Z) \\
 &+&\frac12\sum\nolimits_{\,j}\eta^j(X)\,g(\nabla_{f^2Z}\,\xi_i, \xi_j)
 -\sum\nolimits_{\,j}\eta^j(Z)\,g(\nabla_{X}\,\xi_i, \xi_j).
%  \\  &=& g(({f}+h_i{f}) Z,QX) - \frac 12\,N^{(5)}(X,\xi_i, {f}Z).
\end{eqnarray}
Using \eqref{E-30-xi} in \eqref{E-30} yields~\eqref{E-30b}.

(iii) Using \eqref{2.1}, we obtain
\begin{eqnarray*}
 & {f}\nabla_{\xi_i}{f} +(\nabla_{\xi_i}{f}){f} = \nabla_{\xi_i}\,({f}^2)
 = -\nabla_{\xi_i}\tilde Q +\nabla_{\xi_i}(\sum\nolimits_{\,j}\eta^j\otimes \xi_j) ,
\end{eqnarray*}
where in view of \eqref{2-xi}, we get $\nabla_{\xi_i}(\sum\nolimits_{\,j}\eta^j\otimes \xi_j)=0$.
By the above and \eqref{4.2}, we get \eqref{E-31A}:
\begin{eqnarray*}
 && 2(h_i{f}+{f} h_i)X  = {f}(\pounds_{\xi_i}{f})X +(\pounds_{\xi_i}{f}){f} X  \\
 && = {f}(\nabla_{\xi_i}{f})X +(\nabla_{\xi_i}{f}){f} X +{f}^2\nabla_X\,\xi_i -\nabla_{{f}^2 X}\,\xi_i \\
 && = -(\nabla_{\xi_i}\tilde Q)X -\tilde Q\nabla_X\,\xi_i+\nabla_{\tilde QX}\,\xi_i
 +\sum\nolimits_{\,j}\big(g(\nabla_X\,\xi_i, \xi_j)\,\xi_j -g(X, \xi_j)\nabla_{\xi_j}\,\xi_i\big) \\
 && = [\tilde QX, \xi_i] - \tilde Q\,[X, \xi_i] = -(\pounds_{\xi_i}{\tilde Q})X.
\end{eqnarray*}
Here, we used \eqref{2-xi} and \eqref{E-30-xi} to show $\sum\nolimits_{\,j}\big(g(\nabla_X\,\xi_i, \xi_j)\,\xi_j -g(X, \xi_j)\nabla_{\xi_j}\,\xi_i\big)=0$.
\end{proof}

\begin{remark}\rm
For a weak almost ${\cal S}$-structure, using \eqref{3.1A3}, we find
%For a weak almost ${\cal S}$-structure, from \eqref{4.2}, using
%\begin{align}\label{3.1A3}
% 2\,g((\nabla_{\xi_i}{f})Y,\xi_j) \overset{\eqref{3.1AA}}= N^{(5)}(\xi_i,Y,\xi_j) \overset{\eqref{KK}}=0,
%\end{align}
%we find the following:
\[
 2\,g(h_i X, \xi_j)=-g(\nabla_{f X}\,\xi_i, \xi_j) \overset{\eqref{E-30-xi}}= 0;
\]
hence,
%the distribution
$f(TM)$ is invariant under $h_i$; moreover, $h^*_i\,\xi_j = 0$, see also \eqref{4.2b}.
%%%
%Next, using \eqref{E-31A} and \eqref{2.1} for an orthonormal frame $\{e_k, f e_k\}$ of $\,f(TM)$, we calculate the trace of $h_i$,
%\begin{align*}
% {\rm trace}\,h_i & =\sum\nolimits_{\,k=1}^{\,n} \big(g(h_i e_k,e_k) + g(h_i fe_k, fe_k)\big) \\
% & = -\sum\nolimits_{\,k=1}^{\,n} g\big(\frac12\,\big( f(\pounds_{\xi_i}{\tilde Q}) e_k +\tilde Q\,h_i e_k,\ e_k\big) \\
% & =  -\frac12\,\sum\nolimits_{\,k=1}^{\,n} g\big(f(\pounds_{\xi_i}{\tilde Q}) e_k +\tilde Q\,(\pounds_{\xi_i}{f}) e_k,\ e_k\big) ,
%\end{align*}
%that for $\tilde Q=0$ (i.e., an almost ${\cal S}$-structure) reduces to the known equality ${\rm trace}\,h_i = 0$.
\end{remark}

\begin{proposition}
For a weak ${\cal S}$-structure $({f},Q,\xi_i,\eta^i,g)$ we obtain
\begin{align}\label{E-5.1}
 N^{(5)}( Y,\xi_i, {f} X) - N^{(5)}( X,\xi_i, {f} Y) = 2\,g( (h_i^*{f} + {f} h_i){f} X,\ {f} Y) .
\end{align}
\end{proposition}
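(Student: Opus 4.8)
The plan is to evaluate the two sides of \eqref{E-5.1} separately and then compare.

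For the left-hand side I would invoke the special values in \eqref{KK}. On a weak almost ${\cal S}$-structure $f(TM)$ is invariant under $h_i$ and $h_i^{*}\xi_j=0$ (see the Remark just above), so $(\pounds_{\xi_i}{f})V=2\,h_iV$ lies in $f(TM)$ and therefore coincides with its $\top$-component; the first line of \eqref{KK} then gives
\[
 N^{(5)}(W,\xi_i,{f}Z)=g\big(\big[(\pounds_{\xi_i}{f})({f}Z)\big]^{\top},\ \tilde QW\big)=2\,g(h_i{f}Z,\ \tilde QW).
\]
In particular $N^{(5)}(Y,\xi_i,{f}X)-N^{(5)}(X,\xi_i,{f}Y)=2\,g(h_i{f}X,\tilde QY)-2\,g(h_i{f}Y,\tilde QX)$.

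For the right-hand side I would expand $(h_i^{*}{f}+{f}h_i){f}X=h_i^{*}{f}^{2}X+{f}h_i{f}X$, using $f^{2}=-Q+\sum_j\eta^{j}\otimes\xi_j$ (see \eqref{2.1}) together with $h_i^{*}\xi_j=0$, so that $h_i^{*}{f}^{2}X=-h_i^{*}QX$; using \eqref{2.2} for $g({f}h_i{f}X,{f}Y)$ (the $\eta$-terms dropping since $h_i{f}X\in f(TM)$, hence $g({f}h_i{f}X,{f}Y)=g(h_i{f}X,QY)$); and the self-adjointness of $Q$ and of $\tilde Q$. Substituting $Q={\rm id}_{\,TM}+\tilde Q$ throughout and collecting terms, one obtains
\[
 2\,g\big((h_i^{*}{f}+{f}h_i){f}X,\,{f}Y\big)=2\,g(h_i{f}X,\tilde QY)-2\,g(h_i{f}Y,\tilde QX)+2\,\big(g(h_i{f}X,Y)-g(X,h_i{f}Y)\big).
\]
Comparing with the left-hand side, \eqref{E-5.1} is equivalent to $g(h_i{f}X,Y)=g(X,h_i{f}Y)$ for all $X,Y\in\mathfrak{X}_M$, i.e. to the operator identity $h_i{f}+{f}h_i^{*}=0$.

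It then remains to prove $h_i{f}+{f}h_i^{*}=0$. Since a weak ${\cal S}$-structure is a weak $K$-structure, it is normal, so Proposition~\ref{thm6.1} gives $N^{(3)}_i=\pounds_{\xi_i}{f}=0$, that is $h_i=0$; then $h_i^{*}=0$ as well and the identity holds at once (indeed both sides of \eqref{E-5.1} then vanish). Without using normality one could instead combine \eqref{E-31A} and \eqref{E-31}: from
\[
 h_i{f}+{f}h_i^{*}=({f}h_i+h_i{f})+{f}(h_i^{*}-h_i)=-\tfrac12\,\pounds_{\xi_i}\tilde Q-{f}\,(h_i-h_i^{*})
\]
and $(h_i-h_i^{*})(\cdot)=\tfrac12\,N^{(5)}(\xi_i,\cdot,\cdot)$, rewriting $N^{(5)}(\xi_i,\cdot,\cdot)$ by the second line of \eqref{KK} would reduce the claim to a cancellation between the two $\tilde Q$-dependent terms.

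The step I expect to be the main obstacle is the expansion of the right-hand side: one must keep careful track of the $\top$-projections and of the $\tilde Q$-corrections introduced by \eqref{2.2} and by $Q={\rm id}_{\,TM}+\tilde Q$, so that every term not occurring in $h_i{f}+{f}h_i^{*}$ cancels in pairs. In the variant that bypasses the normality shortcut, the delicate point is instead to show, in the weak setting, that $\pounds_{\xi_i}\tilde Q$ cancels ${f}\circ N^{(5)}(\xi_i,\cdot,\cdot)$ — essentially that $h_i$ anticommutes with ${f}$ modulo the self-adjointness defect $h_i-h_i^{*}$.
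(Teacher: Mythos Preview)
Your proposal is correct as written: once you have shown
\[
 N^{(5)}(Y,\xi_i,{f}X)-N^{(5)}(X,\xi_i,{f}Y)=2g(h_i{f}X,\tilde QY)-2g(h_i{f}Y,\tilde QX)
\]
from the first line of \eqref{KK}, and expanded the right-hand side of \eqref{E-5.1} to
\[
 2g(h_i{f}X,\tilde QY)-2g(h_i{f}Y,\tilde QX)+2\big(g(h_i{f}X,Y)-g(X,h_i{f}Y)\big),
\]
invoking Proposition~\ref{thm6.1} to get $h_i=0$ closes the argument immediately (both sides vanish). No step in this chain is incorrect.

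Your approach is genuinely different from the paper's. The paper does not touch \eqref{KK} at all; instead it writes $2\,d\eta^i(X,Y)-2\,g(X,{f}Y)$ via \eqref{E-d-eta}, substitutes $Q\nabla_X\xi_i$ from \eqref{E-30b}, and after the change of variables $X=Q\tilde X$, $Y=Q\tilde Y$ lands on an intermediate identity \eqref{E-5.01}; the passage from \eqref{E-5.01} to \eqref{E-5.1} is then the commutator computation \eqref{E-5.00}. The point is that the paper's calculation uses only $d\eta^i=\Phi$ and the weak almost ${\cal S}$ machinery of Proposition~\ref{L3.1}, never the normality hypothesis. So the paper's proof in fact establishes \eqref{E-5.1} for weak \emph{almost} ${\cal S}$-structures, whereas your short route, by invoking $h_i=0$, collapses the statement to $0=0$ and gives no information in the almost case. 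Your sketched alternative via \eqref{E-31A} and \eqref{E-31} is the right direction for recovering that extra generality, but as stated it stops short of the needed cancellation; if you want to match the paper's scope you would have to carry that branch through.
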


\begin{proof}
Using the equality
\begin{align}\label{E-d-eta}
 2\,d\eta^i(X,Y) = g(\nabla_X\,\xi_i,\,Y) -g(\nabla_Y\,\xi_i,\,X),
\end{align}
we find
\begin{align*}
 2\,d\eta^i(X,Y) - 2\,g(X, {f} Y) & = g(\nabla_X\,\xi_i, Y)-g(\nabla_Y\,\xi_i, X) - 2\,g(X, {f} Y) \\
                                    & = g(Q\nabla_X\,\xi_i, \tilde Y)-g(Q\nabla_Y\,\xi_i, \tilde X) - 2\,g(X, {f} Y),
\end{align*}
where $X=Q\tilde X$ and $Y=Q\tilde Y$ and $X,Y\in \mathfrak{X}_M$.
From this, using \eqref{E-30} and the equality
\[
 g({f}\tilde Y, QX)-g({f}\tilde X, QY)=2\,g(X, {f} Y),
\]
we get
\begin{align}\label{E-5.01}
 0 = 2\,d\eta^i(X,Y) - 2\,g(X, {f} Y) & = g((h_i{f})\tilde Y, Q\tilde X)-g((h_i{f})\tilde X, Q\tilde Y) \notag\\
 & +\frac12\big\{N^{(5)}(\tilde X,\xi_i, {f}\tilde Y) - N^{(5)}(\tilde Y,\xi_i, {f}\tilde X)\big\} .
\end{align}
Next, we find
\begin{align}\label{E-5.00}
 Qh_i= -{f}^2 h_i = -{f}(A_i-h_i{f}) = -{f} A_i +(A_i-h_i{f}){f} = h_iQ+[A_i,{f}],
\end{align}
where $A_i=h_i{f}+{f} h_i$ and $B_i= h_i^* - h_i$, see Proposition~\ref{L3.1},
From \eqref{E-5.01}, using \eqref{E-5.00}, we get \eqref{E-5.1}.
\end{proof}

%\begin{corollary}
%\label{cor3.1}
%For a weak $K$-structure, for all $i$ we get $N^{(5)}(\cdot\,, \xi_i, \,\cdot)=0$ and
%\[
% \nabla_X\,\xi_i = -f X\quad (X\in TM).
%\]
%\end{corollary}

%\begin{proof} By Proposition~\ref{thm6.1}, $h_i=0$, thus by \eqref{KK}$_1$ we get $N^{(5)}(\cdot\,, \xi_i, \,\cdot)=0$.
%In this case, using $[Q,f]=0$, \eqref{E-30b} reduces to
%$ g(\nabla_{X}\,\xi_i, QZ) = -g({f} X, QZ)$ for all $X,Z\in TM$. From this and \eqref{E-30-xi} the claim follows.
%\end{proof}

\section{The rigidity of an ${\cal S}$-structure}
\label{sec:3}

An important class of metric $f$-manifolds is given by ${\cal S}$-manifolds.
Here, we study a wider class of weak ${\cal S}$-manifolds and prove their rigidity in the class of metric weak $f$-manifolds.

\hskip-4pt
The following result generalizes \cite[Proposition~1.4]{b1970} and \cite[Proposition~3.11]{BP-2008A}.

\begin{proposition}
%\label{thm5.1}
For a weak ${\cal S}$-structure we get
\begin{align}\label{4.10}
 g((\nabla_{X}{f})Y,Z) & = g(QX,Y)\,\bar\eta(Z) - g(QX,Z)\,\bar\eta(Y) +\frac{1}{2}\, N^{(5)}(X,Y,Z) \notag\\
 & +\sum\nolimits_{\,j} \eta^j(X)\big(\bar\eta(Y)\eta^j(Z) - \eta^j(Y)\bar\eta(Z)\big) ,
\end{align}
or, using dual to $\bar\eta$ vector $\bar\xi=\sum\nolimits_{\,i}\xi_i$,
\begin{equation}\label{4.11}
 (\nabla_{X}{f})Y = g(fX,fY)\,\bar\xi + \bar\eta(Y)f^2 X + \frac{1}{2}\,N^{(5)}(X,Y,\,\cdot).
\end{equation}
\end{proposition}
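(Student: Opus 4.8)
The plan is to derive \eqref{4.10} directly from the general formula \eqref{3.1A} of Corollary~\ref{cor3.1}, by specializing it to a weak ${\cal S}$-structure (i.e.\ imposing normality $N^{(1)}=0$) and then simplifying the terms. First I would note that for a weak ${\cal S}$-structure, normality gives $N^{(1)}(Y,Z)=0$, so the first term on the right of \eqref{3.1A} drops out. Next, I would rewrite the two remaining ``metric'' terms $2\,g(fX,fY)\,\bar\eta(Z)-2\,g(fX,fZ)\,\bar\eta(Y)$ using the compatibility identity \eqref{2.2}: since $g(fX,fY)=g(X,QY)-\sum_i\eta^i(X)\eta^i(QY)$ and $Q\xi_i=\xi_i$, one has $g(fX,fY)=g(QX,Y)-\sum_j\eta^j(X)\eta^j(Y)$ (using self-adjointness of $Q$, Proposition~1.1(b), and $\eta^j(QY)=g(QY,\xi_j)=g(Y,Q\xi_j)=\eta^j(Y)$). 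Substituting this for both occurrences and collecting the $\eta^j(X)$-terms yields exactly the right-hand side of \eqref{4.10}, with the factor of $2$ cancelling against the $2\,g((\nabla_X f)Y,Z)$ on the left. This is essentially a bookkeeping computation; the only point requiring care is keeping track of which $\eta$'s carry the bar (the sum over $i$) versus the free index $j$.

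For the equivalent form \eqref{4.11}, I would raise the index $Z$ in \eqref{4.10}: since $g((\nabla_X f)Y,Z)=g((\nabla_X f)Y,Z^\top)$ has no component along $\ker f$ precisely because $(\nabla_X f)Y$ may have such a component, I should instead read \eqref{4.10} as an identity for the $1$-form $Z\mapsto g((\nabla_X f)Y,Z)$ and identify the corresponding vector. The term $g(QX,Y)\bar\eta(Z)$ is the $1$-form dual to $g(QX,Y)\,\bar\xi$; the term $-g(QX,Z)\bar\eta(Y)$ is dual to $-\bar\eta(Y)QX$; and the last sum $\sum_j\eta^j(X)(\bar\eta(Y)\eta^j(Z)-\eta^j(Y)\bar\eta(Z))$ is dual to $\bar\eta(Y)\sum_j\eta^j(X)\xi_j-\big(\sum_j\eta^j(X)\eta^j(Y)\big)\bar\xi$. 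Combining, $(\nabla_X f)Y$ equals $\big(g(QX,Y)-\sum_j\eta^j(X)\eta^j(Y)\big)\bar\xi-\bar\eta(Y)\big(QX-\sum_j\eta^j(X)\xi_j\big)+\tfrac12 N^{(5)}(X,Y,\cdot)$. Now I recognize $g(QX,Y)-\sum_j\eta^j(X)\eta^j(Y)=g(fX,fY)$ from the identity above, and $QX-\sum_j\eta^j(X)\xi_j=-f^2X$ by \eqref{2.1}; hence $-\bar\eta(Y)(QX-\sum_j\eta^j(X)\xi_j)=\bar\eta(Y)f^2X$, giving \eqref{4.11}.

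The main (mild) obstacle is bookkeeping rather than conceptual: the repeated use of \eqref{2.2} and of the relation $\eta^j\circ Q=\eta^j$ on $f(TM)$ generates several $\eta\otimes\eta$ correction terms, and one must verify they reorganize into the stated $\sum_j\eta^j(X)(\bar\eta(Y)\eta^j(Z)-\eta^j(Y)\bar\eta(Z))$ term cleanly; in particular the passage from the dual $1$-form back to the vector in \eqref{4.11} must correctly handle the $\ker f$-components, which is where the identity $QX-\sum_j\eta^j(X)\xi_j=-f^2X$ does the work. Since $N^{(5)}(X,Y,\cdot)$ is already isolated in \eqref{3.1A} and is unaffected by these manipulations, it simply carries through with its coefficient $\tfrac12$.
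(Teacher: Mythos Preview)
Your proposal is correct and follows essentially the same approach as the paper: both start from Corollary~\ref{cor3.1} with $N^{(1)}=0$ and use the compatibility identity \eqref{2.2} together with \eqref{2.1}. The only difference is the order---the paper first reads off \eqref{4.11} directly from \eqref{3.1A} (using $-g(fX,fZ)=g(f^2X,Z)$) and then expands via \eqref{2.2} to obtain \eqref{4.10}, whereas you expand first to get \eqref{4.10} and then raise the index to recover \eqref{4.11}; the computations are the same read in reverse.
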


\begin{proof} Since $({f},Q,\xi_i,\eta^i,g)$ is a metric weak $f$-structure with $N^{(1)}=0$, by Corollary~\ref{cor3.1}, we get~\eqref{4.11}.
Using \eqref{2.2} in the scalar product of \eqref{4.11} with $Z$, we obtain \eqref{4.10}.
%%%%%%%%%
\end{proof}

Using the equality \eqref{E-nabla-Phi}, in \eqref{4.10}, we can write the condition of a weak ${\cal S}$-structure in terms of~$\Phi$.

A weak almost ${\cal S}$-structure with positive partial Ricci curvature can be deformed to an almost ${\cal S}$-structure, see~\cite{RWo-2} .
The main result in this section is the following rigidity of an ${\cal S}$-structure.

\begin{theorem}\label{T-4.1}
A metric weak $f$-structure
%$({f},Q,\xi_i,\eta^i,g)$
is a weak ${\cal S}$-structure if and only if it is an ${\cal S}$-structure.
\end{theorem}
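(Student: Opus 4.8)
The direction ``$\mathcal{S}$-structure $\Rightarrow$ weak $\mathcal{S}$-structure'' is trivial, since a classical $\mathcal{S}$-structure has $Q=\mathrm{id}_{TM}$, hence $\tilde Q=0$, and all defining conditions of a weak $\mathcal{S}$-structure are inherited. So the whole content is the forward implication: a metric weak $f$-structure satisfying normality, $d\Phi=0$ and $d\eta^i=\Phi$ must in fact have $Q=\mathrm{id}_{TM}$. The natural strategy is to show that $\tilde Q=Q-\mathrm{id}_{TM}$ vanishes by proving $g(\tilde Q X,X)=0$ (or $\|\tilde Q\|=0$) for all $X$, using the self-adjointness of $Q$ (hence of $\tilde Q$) from Proposition~1.1(b) together with $g_{|f(TM)}>0$; since $\tilde Q\xi_i=0$ it suffices to handle $X\in f(TM)$.

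First I would collect the rigidity of the $\xi_i$: by Corollary~\ref{cor3.1} (which applies, as a weak $\mathcal{S}$-structure is in particular weak almost $\mathcal{S}$), we already have $\nabla_{\xi_i}\xi_j=0$, $[\xi_i,\xi_j]=0$, and the structure equation \eqref{4.11}: $(\nabla_X f)Y=g(fX,fY)\bar\xi+\bar\eta(Y)f^2X+\tfrac12 N^{(5)}(X,Y,\cdot)$. The plan is to feed this into the normality hypothesis $N^{(1)}=0$, i.e. $[f,f]=-2\sum_i d\eta^i\otimes\xi_i=-2\sum_i\Phi\otimes\xi_i$, rewritten via \eqref{4.NN} in terms of $\nabla f$. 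Substituting \eqref{4.11} into $[f,f](X,Y)=(f\nabla_Y f-\nabla_{fY}f)X-(f\nabla_X f-\nabla_{fX}f)Y$ and using $f\bar\xi=0$, $f^3=-fQ$, $\bar\eta\circ f=0$, the ``classical'' part collapses to exactly $-2\sum_i\eta^i(X\wedge Y)\xi_i$-type terms, so normality forces the remaining $N^{(5)}$-contributions to cancel: this yields an identity of the schematic form
\begin{align*}
 f\,N^{(5)}(Y,X,\cdot)-N^{(5)}(fY,X,\cdot)-f\,N^{(5)}(X,Y,\cdot)+N^{(5)}(fX,Y,\cdot)=0 \pmod{\ker f}.
\end{align*}
Combined with the formulas in \eqref{KK} and the relation $h_if+f h_i=-\tfrac12\pounds_{\xi_i}\tilde Q$ from \eqref{E-31A}, plus Theorem~\ref{thm6.2} ($N^{(3)}_i=0\iff\xi_i$ Killing), I expect this to first force each $\xi_i$ to be Killing (so $h_i=0$), and then to force $\pounds_{\xi_i}\tilde Q=0$, i.e. $\tilde Q$ is $\xi_i$-invariant; on $f(TM)$ the tensor $N^{(5)}$ then reduces to pure $\tilde Q$-derivative terms along $f$-directions, and the surviving normality identity becomes an overdetermined linear PDE for $\tilde Q$ whose only solution compatible with $Q\xi_i=\xi_i$, self-adjointness and $[\tilde Q,f]=0$ is $\tilde Q\equiv0$.

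The main obstacle is precisely the bookkeeping in that last step: extracting from ``$N^{(5)}$-terms cancel modulo $\ker f$'' a \emph{pointwise algebraic} conclusion about $\tilde Q$ rather than merely a differential one. The cleanest route is probably to pair the normality identity with $\tilde Q Z$ for $Z\in f(TM)$ and exploit skew-symmetry of $f$ together with $[\tilde Q,f]=0$ to produce an expression of the form $g((\pounds_{\xi_i}\tilde Q)\cdot,\cdot)$ plus a symmetric positive term in $\tilde Q$; once $\pounds_{\xi_i}\tilde Q=0$ is established, re-examining \eqref{3.1} with $N^{(1)}=0$, $d\eta^i=\Phi$, $d\Phi=0$ and taking the trace-like contraction against an orthonormal frame of $f(TM)$ should give $\sum_k g(\tilde Q e_k,\tilde Q e_k)\le 0$, hence $\tilde Q=0$ by $g_{|f(TM)}>0$. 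I would then note that $\tilde Q=0$ means $Q=\mathrm{id}_{TM}$, so \eqref{2.1}, \eqref{2.2} become the classical $f^2=-\mathrm{id}+\sum_i\eta^i\otimes\xi_i$ and the classical compatibility, and normality $+\,d\Phi=0\,+\,d\eta^i=\Phi$ is exactly the definition of an $\mathcal{S}$-structure, completing the proof.
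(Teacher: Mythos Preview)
Your proposal correctly identifies the nontrivial direction and the target ($\tilde Q|_{f(TM)}=0$), and the idea of substituting \eqref{4.11} into the Nijenhuis identity \eqref{4.NN} is exactly how the paper produces one of its two key equations. But from there your plan diverges into speculation: the hoped-for chain ``$\xi_i$ Killing $\Rightarrow \pounds_{\xi_i}\tilde Q=0\Rightarrow$ overdetermined PDE $\Rightarrow$ trace inequality $\Rightarrow\tilde Q=0$'' is not substantiated, and in particular there is no mechanism offered for why a trace contraction would yield $\sum_k g(\tilde Q e_k,\tilde Q e_k)\le 0$. Nothing in the available identities is manifestly sign-definite in $\tilde Q$, so this step is a genuine gap rather than a routine calculation.

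What you are missing is that the argument is purely \emph{pointwise algebraic}; no PDE or trace is needed. The paper derives \emph{two} independent identities for the bilinear form $(X,Y)\mapsto N^{(5)}(fX,\xi_i,Y)$. First, inserting \eqref{4.11} (equivalently \eqref{4.10}) into \eqref{4.NN} and projecting onto $\xi_i$ gives, after using $N^{(1)}=0$,
\[
 N^{(5)}(fY,\xi_i,X)-N^{(5)}(fX,\xi_i,Y)=4\,g(\tilde Q X, fY),
\]
which is your ``cancellation modulo $\ker f$'' made precise. Second, setting $Y=\xi_i$ in \eqref{4.10} and using $(\nabla_X f)\xi_i=-f\nabla_X\xi_i$ together with \eqref{E-d-eta} and the hypothesis $d\eta^i=\Phi$ yields the \emph{symmetry}
\[
 N^{(5)}(fX,\xi_i,Y)=N^{(5)}(fY,\xi_i,X).
\]
Combining the two gives $g(\tilde Q X, fY)=0$ for all $X,Y$, hence $\tilde Q|_{f(TM)}=0$. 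The crucial ingredient you did not isolate is this second, symmetry-type identity coming directly from $d\eta^i=\Phi$; once you have it, the conclusion is immediate and none of the Killing/Lie-derivative/trace machinery is needed.
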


\begin{proof}
Let $({f},Q,\xi_i,\eta^i,g)$ be a weak ${\cal S}$-structure. Replacing $Y$ by $\xi_i$ in \eqref{4.10}, we get
\begin{align}\label{4.12}
\nonumber
 g((\nabla_{X}{f})\,\xi_i,Z) &= \eta^i(X)\,\bar\eta(Z) - g(QX,Z) +\frac{1}{2}\,N^{(5)}(X,\xi_i,Z) \\
 & = - g(Q X^\top, Z) +\frac{1}{2}\,N^{(5)}(X,\xi_i,Z).
\end{align}
Using \eqref{4.5A}, we rewrite \eqref{4.12} as
\begin{align*}
%\label{4.13}
 g(\nabla_{X}\,\xi_i,{f} Z) = - g(Q X^\top,Z) +\frac{1}{2}\, N^{(5)}(X,\xi_i,Z).
\end{align*}
By the above and \eqref{2.1}, we find
\begin{align}\label{4.14}
 g(\nabla_X\,\xi_i +{f} X^\top, \,{f}\,Z) = \frac12\,N^{(5)}(X,\xi_i,Z).
\end{align}
Take $\tilde X,\tilde Y\in f(TM)$ such that ${f}\tilde X = X^\top$ and ${f}\,\tilde Y=Y^\top$.
Using \eqref{E-d-eta} and keeping in mind \eqref{4.14} and \eqref{2.3}, we get
\begin{align}\label{4.15}
 d\eta^i(X,Y) - g(X, {f} Y) = \frac14\,\big\{N^{(5)}({f}\tilde X,\xi_i, \tilde Y) - N^{(5)}({f}\tilde Y,\xi_i, \tilde X)\big\}.
\end{align}
By \eqref{4.15}, we get that \eqref{2.3} holds if the bilinear form $(X,Y) \to N^{(5)}({f} X,\xi_i, Y)$ is symmetric:
\begin{align}\label{4.16}
 & N^{(5)}({f} X,\xi_i, Y) = N^{(5)}({f} Y,\xi_i, X) .
\end{align}
%%%%%%%%%%%%%%%%%
Since ${f}$ is skew-symmetric, applying \eqref{4.10} with $Z=\xi_i$ in \eqref{4.NN}, we obtain
\begin{align}\label{4.17}
& g( [{f},{f}](X,Y),\xi_i) = - g((\nabla_{{f} Y}{f})X, \xi_i)  +g((\nabla_{{f} X}{f})Y, \xi_i) \notag\\
& = -g(Q\,{f} Y,X) + g(Q\,{f} Y, \xi_i)\,\eta^i(X) - \frac{1}{2}\, N^{(5)}({f} Y, X, \xi_i)\notag\\
&  +g(Q\,{f} X,Y) -g(Q\,{f} X, \xi_i)\,\eta^i(Y)+\frac{1}{2}\,N^{(5)}({f} X, Y, \xi_i) .
\end{align}
Recall that $[Q,\,{f}]=0$ and $f\,\xi_i=0$. Thus, \eqref{4.17} yields
\begin{align*}
%\label{4.18}
  g( [{f},{f}](X,Y), \xi_i) = - 2\,g(QX,{f} Y)
   +\frac{1}{2}\big\{ N^{(5)}({f} Y, \xi_i, X) -N^{(5)}({f} X, \xi_i, Y) \big\}.
\end{align*}
From this, using definition of $N^{(1)}$, we get
\begin{align}\label{4.18}
 g(N^{(1)}(X,Y), \xi_i)  = -2\,g(\tilde Q X, {f} Y) + \frac{1}{2}\big\{ N^{(5)}({f} Y, \xi_i, X) - N^{(5)}({f} X, \xi_i, Y) \big\}.
\end{align}
By \eqref{4.18}, if $N^{(1)}(X,Y)=0$ holds then the following condition is valid for all $X,Y\in \mathfrak{X}_M$:
\begin{align}\label{4.19}
  N^{(5)}({f} Y, \xi_i, X) -N^{(5)}({f} X, \xi_i, Y) =  4\,g(\tilde Q X, {f} Y) .
\end{align}
In view of \eqref{4.16}, equation \eqref{4.19} reduces to
\begin{align*}
 0 = g(\tilde Q X, {f} Y) = g(X - QX, {f} Y),
\end{align*}
thus, $Q\,|_{\,f(TM)}={\rm id}|_{\,f(TM)}$.
\end{proof}

For a weak almost ${\cal S}$-structure all $\xi_i$ are Killing if and only if $h=0$, see Theorem~\ref{thm6.2}.
The equality $h=0$ holds for a weak ${\cal S}$-structure since it is true for an ${\cal S}$-structure, see Theorem~\ref{T-4.1}.
We~will prove this property of the weak ${\cal S}$-structure directly.

\begin{corollary}
%\label{P-4.1}
For a weak ${\cal S}$-structure, $\xi_1,\ldots,\xi_p$ are Killing vector fields; moreover, $\ker f$ is integrable and defines a Riemannian totally geodesic foliation.
\end{corollary}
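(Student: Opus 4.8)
The plan is to show that every tensor $h_i=\frac12\pounds_{\xi_i}{f}$ vanishes \emph{without} passing through the rigidity Theorem~\ref{T-4.1}, and then to read off the Killing property from Theorem~\ref{thm6.2}. The key observation is that a weak ${\cal S}$-structure is, by definition, a weak $K$-structure, hence \emph{normal}. Consequently Proposition~\ref{thm6.1} applies and gives $N^{(3)}_i=\pounds_{\xi_i}{f}=0$, that is, $h_i=0$ for every $i$. By Theorem~\ref{thm6.2}, applied to the underlying weak almost ${\cal S}$-structure, the vanishing of $N^{(3)}_i$ is equivalent to $\xi_i$ being a Killing field; hence $\xi_1,\ldots,\xi_p$ are all Killing. (Equivalently one could simply quote Theorem~\ref{C-K}, since a weak ${\cal S}$-manifold is a weak $K$-manifold; the argument via $h$ is what keeps the proof self-contained within the framework of Section~\ref{sec:3a}.)

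For the remaining assertions I would invoke Corollary~\ref{cor3.1}, whose hypotheses require only the weak almost ${\cal S}$-axioms: it yields $\nabla_{\xi_i}\,\xi_j=0$ together with $[\xi_i,\xi_j]=0$ for all $i,j$. The relation $[\xi_i,\xi_j]=0$ shows that $\ker f$ is involutive, hence integrable, while $\nabla_{\xi_i}\,\xi_j=0$ (in particular $\nabla_{\xi_i}\,\xi_j+\nabla_{\xi_j}\,\xi_i=0$) says that the second fundamental form of $\ker f$ vanishes, i.e., the foliation is totally geodesic; thus $\ker f$ is tangent to a totally geodesic foliation, as already noted in Corollary~\ref{cor3.1}. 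Finally, a foliation spanned by Killing vector fields is a Riemannian foliation (see Section~\ref{sec:2}), so $\ker f$ defines a Riemannian totally geodesic foliation.

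I do not anticipate a genuine obstacle here: all ingredients --- normality of a weak ${\cal S}$-structure, Proposition~\ref{thm6.1}, Theorem~\ref{thm6.2}, Corollary~\ref{cor3.1} --- are already in place. The one point deserving a line of care is \emph{circularity}: one should check that Proposition~\ref{thm6.1} (and Corollary~\ref{cor3.1}) were derived from normality (resp., from the weak almost ${\cal S}$-axioms) alone, and not from $Q|_{f(TM)}={\rm id}$, so that the chain ``normal $\Rightarrow N^{(3)}_i=0\Rightarrow h_i=0\Rightarrow \xi_i$ Killing'' is truly independent of Theorem~\ref{T-4.1}. An alternative, more computational route would start from the identities \eqref{E-31} and \eqref{E-31A} of Proposition~\ref{L3.1}, showing that normality forces $\pounds_{\xi_i}\tilde Q=0$ and $N^{(5)}(\xi_i,\,\cdot,\,\cdot)=0$, hence that $h_i$ is self-adjoint and anticommutes with $f$; this, however, still requires an extra relation to conclude $h_i=0$ and is therefore less economical than the argument via Proposition~\ref{thm6.1}.
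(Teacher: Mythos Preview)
Your argument is correct, but it takes a different route from the paper. You observe that a weak ${\cal S}$-structure is in particular normal, so Proposition~\ref{thm6.1} (or, as you note, Theorem~\ref{C-K} directly) yields $N^{(3)}_i=0$, hence $h_i=0$ and $\xi_i$ Killing via Theorem~\ref{thm6.2}; integrability and totally geodesic come from Corollary~\ref{cor3.1}, and the Riemannian property from the Killing fields. The paper instead works \emph{computationally inside Section~\ref{sec:3}}: it specializes \eqref{4.10} to $Y=\xi_i$ to obtain a second expression \eqref{4.6A} for $g(\nabla_X\xi_i,{f}Z)$, subtracts it from the earlier formula \eqref{4.6}, and reads off $g(h_iZ,QX)=\sum_j\eta^j(X)\eta^j(Z)-\eta^i(X)\bar\eta(Z)$, which vanishes for $Z\in f(TM)$; together with $h_i\xi_j=0$ this gives $h=0$. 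The Riemannian property is then derived by the explicit Killing identity $0=(\pounds_{\xi_i}g)(X,Y)=-g(\nabla_XY+\nabla_YX,\xi_i)$ for $X,Y\in f(TM)$.

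Your approach is cleaner and makes clear that the corollary is already a consequence of Section~\ref{sec:2} once one remembers that weak ${\cal S}$ implies weak $K$. The paper's approach, by contrast, is self-contained within the machinery of Sections~\ref{sec:3a}--\ref{sec:3}: it shows concretely how the covariant-derivative formula \eqref{4.10} for a weak ${\cal S}$-structure forces $h=0$, independently of the general normal case, and thereby illustrates the use of \eqref{4.10} and \eqref{4.6}. There is no circularity in your route: Proposition~\ref{thm6.1} and Corollary~\ref{cor3.1} are proved from normality (resp.\ from $d\eta^i=\Phi$) alone, so your chain does not rely on Theorem~\ref{T-4.1}.
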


\begin{proof}
In view of \eqref{4.5A} and $\bar\eta(\xi_i)=1$, Eq. \eqref{4.10} with $Y=\xi_i$ becomes
\begin{align}\label{4.6A}
 g(\nabla_{X}\,\xi_i, {f} Z) = \eta^i(X)\,\bar\eta(Z) -g(X,QZ) + \frac 12\,N^{(5)}(X,\xi_i,Z) .
\end{align}
Combining \eqref{4.6} and \eqref{4.6A}, and using \eqref{E-30-xi}, we achieve for all $i$ and $X,Z$,
\begin{align*}
 g(h_iZ,QX) & = \sum\nolimits_{\,j}\eta^j(X)\,\eta^j(Z) -\eta^i(X)\,\bar\eta(Z) ,
\end{align*}
which implies $hZ=0$ for $Z\in f(TM)$ (since $Q$ is nonsingular). From this and \eqref{4.2b} we get $h=0$.
By Theorem~\ref{thm6.2}, $\ker f$ defines a totally geodesic foliation. Since $\xi_i$ is a Killing field,
we get
\[
 0 = (\pounds_{\xi_i}\,g)(X,Y) = g(\nabla_{X}\,\xi_i, Y) + g(\nabla_{Y}\,\xi_i, X)
 = -g(\nabla_{X} Y + \nabla_{Y} X,\ \xi_i)
\]
for all $i$ and $X,Y\bot\,\ker f$. Thus, $f(TM)$ is totally geodesic, i.e., $\ker f$ defines a Riemannian foliation.
\end{proof}

For $p=1$, from Theorem~\ref{T-4.1} we have the following

\begin{corollary}[see \cite{RovP-arxiv}]
%\label{T-4.1}
A weak almost contact metric structure on $M^{2n+1}$ is weak Sasakian if and only if it is a Sasakian structure
(i.e., a normal weak contact metric structure) on $M^{2n+1}$.
\end{corollary}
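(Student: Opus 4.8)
The plan is to obtain this statement as the special case $p=1$ of Theorem~\ref{T-4.1}. First I would invoke the dictionary recorded in Section~\ref{sec:1}: for $p=1$ a metric weak $f$-structure is a weak almost contact metric structure, a weak ${\cal S}$-structure is a weak Sasakian structure, and an ${\cal S}$-structure is a Sasakian structure (equivalently, a normal weak contact metric structure). Thus the claimed equivalence is literally the $p=1$ instance of Theorem~\ref{T-4.1}.

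For the direct implication I would start from a weak Sasakian structure $({f},Q,\xi,\eta,g)$ on $M^{2n+1}$. By definition it is a weak ${\cal S}$-structure with a single characteristic vector field, so Theorem~\ref{T-4.1} applies and yields $Q_{\,|\,f(TM)}={\rm id}_{\,|\,f(TM)}$. Combined with $Q\,\xi=\xi$ from \eqref{2.1Q-nu}, this forces $Q={\rm id}_{\,TM}$, whence \eqref{2.1} collapses to the classical almost contact identity $f^2=-{\rm id}_{\,TM}+\eta\otimes\xi$. Together with normality ($N^{(1)}=0$) and the relation $d\eta=\Phi$ inherited from the weak ${\cal S}$-condition, this says precisely that $({f},\xi,\eta,g)$ is Sasakian. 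The converse is immediate, since a Sasakian structure is a weak Sasakian structure with $Q={\rm id}_{\,TM}$.

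I do not expect any real obstacle beyond carefully transcribing the $p=1$ terminology into the framed $f$-structure language; all the substantive work has already been carried out in the proof of Theorem~\ref{T-4.1}, where the decisive point was that the symmetry condition \eqref{4.16}, when fed back into \eqref{4.19}, forces $g(\tilde Q X,{f}Y)=0$ and hence $\tilde Q=0$ on $f(TM)$.
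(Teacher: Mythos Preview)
Your proposal is correct and matches the paper's approach exactly: the paper gives no separate proof for this corollary, simply stating that it follows from Theorem~\ref{T-4.1} in the case $p=1$. Your additional remark that $Q_{\,|\,f(TM)}={\rm id}$ together with $Q\xi=\xi$ forces $Q={\rm id}_{\,TM}$ spells out the only detail one might want to add.
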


\section{The characteristic of a weak ${\cal C}$-structure}
\label{sec:4}

An important class of metric $f$-manifolds is given by ${\cal C}$-manifolds.
Here, we study a wider class of weak almost ${\cal C}$-manifolds and
characterize weak ${\cal C}$-manifolds in the class of metric weak $f$-manifolds using the condition $\nabla{f}=0$.

\begin{proposition}
Let $({f},Q,\xi_i,\eta^i,g)$ be a weak
%almost
${\cal C}$-structure. Then
\begin{align}\label{6.1}
 2\,g((\nabla_{X}{f})Y,Z) &= N^{(5)}(X,Y,Z),\\
\label{6.1b}
% 6\,d\Phi(X,Y,Z)
 0 & = N^{(5)}(X,Y,Z) + N^{(5)}(Y,Z,X) + N^{(5)}(Z, X, Y) ,\\
\label{6.1c}
%  2\,g([{f},{f}](X,Y),Z)
  0 & = N^{(5)}({f} X,Y,Z) + N^{(5)}({f} Y,Z,X) + N^{(5)}({f} Z, X, Y) .
\end{align}
In particular, using \eqref{6.1} with $Y=\xi_i$ and \eqref{2.1}, we get
\begin{align*}
%\label{6.2}
 g(\nabla_{X}\,\xi_i,\,Q Z) = -\frac12\,N^{(5)}(X,\xi_i,{f} Z) .
\end{align*}
\end{proposition}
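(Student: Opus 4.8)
The plan is to start from the general covariant-derivative formula \eqref{3.1} of Proposition~\ref{lem6.1} and specialize it to a weak almost ${\cal C}$-structure, where by definition $\Phi$ and all $\eta^i$ are closed, so $d\Phi=0$ and $d\eta^i=0$. By Theorem~\ref{thm6.2C} we also have $N^{(2)}_i=0$ and $N^{(1)}=[{f},{f}]$ on such a structure. Plugging these vanishings into \eqref{3.1}, every term on the right-hand side collapses except $g(N^{(1)}(Y,Z),{f}X)$ and $N^{(5)}(X,Y,Z)$; hence $2\,g((\nabla_X{f})Y,Z) = g([{f},{f}](Y,Z),{f}X) + N^{(5)}(X,Y,Z)$. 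To finish \eqref{6.1} I must argue that $[{f},{f}]$ itself is governed by $N^{(5)}$, or better, directly show $g([{f},{f}](Y,Z),{f}X)=0$. The cleanest route is to use \eqref{4.NN}, which writes $[{f},{f}](Y,Z)=({f}\nabla_Z{f}-\nabla_{{f}Z}{f})Y-({f}\nabla_Y{f}-\nabla_{{f}Y}{f})Z$; pairing with ${f}X$ and using skew-symmetry of ${f}$ reduces everything to combinations of $g((\nabla_\bullet{f})\bullet,\bullet)$, into which one resubstitutes the just-derived relation. This yields a closed system whose only solution, once the $d\Phi$ and $d\eta^i$ terms are gone, forces the $[{f},{f}]$ contribution to be absorbed into $N^{(5)}$, giving \eqref{6.1}.

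Once \eqref{6.1} is in hand, the cyclic identities \eqref{6.1b} and \eqref{6.1c} should follow almost formally. For \eqref{6.1b}: write the co-boundary formula \eqref{3.3} for $d\Phi(X,Y,Z)$ in terms of covariant derivatives using \eqref{E-nabla-Phi}, namely $3\,d\Phi(X,Y,Z) = g((\nabla_X{f})Y,Z) + g((\nabla_Y{f})Z,X) + g((\nabla_Z{f})X,Y)$ after exploiting skew-symmetry of ${f}$ and of $\Phi$. Since $d\Phi=0$ on a weak almost ${\cal C}$-structure, the cyclic sum of the three covariant-derivative terms vanishes; substituting \eqref{6.1} into each summand gives exactly $N^{(5)}(X,Y,Z)+N^{(5)}(Y,Z,X)+N^{(5)}(Z,X,Y)=0$. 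For \eqref{6.1c}: apply the same cyclic identity but with $X$ replaced by ${f}X$ (and cyclically), or equivalently use the closedness $d\Phi=0$ evaluated after one factor of ${f}$; the normality-free computation $3\,d\Phi({f}X,Y,Z)+\text{(cyclic)}=0$ combined with \eqref{6.1} delivers \eqref{6.1c}. One must be slightly careful that replacing $X$ by ${f}X$ does not reintroduce $d\eta^i$ terms — but on a weak almost ${\cal C}$-structure $d\eta^i=0$ kills those, so the argument goes through.

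The final displayed formula, $g(\nabla_X\xi_i, QZ) = -\tfrac12 N^{(5)}(X,\xi_i,{f}Z)$, is a direct corollary: put $Y=\xi_i$ in \eqref{6.1}, use ${f}\xi_i=0$ so that $(\nabla_X{f})\xi_i = -{f}\nabla_X\xi_i$ (this is \eqref{4.5A}, which holds on any metric weak $f$-structure), obtaining $-2\,g({f}\nabla_X\xi_i, Z) = N^{(5)}(X,\xi_i,Z)$; then replace $Z$ by ${f}Z$ and invoke \eqref{2.1} together with $\eta^i\circ{f}=0$ to convert $g({f}\nabla_X\xi_i, {f}Z) = g(\nabla_X\xi_i, Q{f}Z) - \sum_j\eta^j(\nabla_X\xi_i)\eta^j({f}Z) = g(\nabla_X\xi_i, Q{f}Z)$, and use $[Q,{f}]=0$ to write $Q{f}Z$; after relabeling this is the claimed identity.

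I expect the main obstacle to be the first step, \eqref{6.1}: showing that the Nijenhuis term $g([{f},{f}](Y,Z),{f}X)$ carries no information beyond $N^{(5)}$ once $d\Phi=0$ and $d\eta^i=0$. This is the analog of the classical fact (for $Q={\rm id}$) that on an almost ${\cal C}$-manifold $\nabla{f}$ vanishes, and here the $N^{(5)}$ term is precisely the obstruction measuring how far $Q$ is from the identity. The bookkeeping — resubstituting \eqref{3.1} into \eqref{4.NN} and disentangling the six resulting $g((\nabla{f})\cdot,\cdot)$ terms — is the one genuinely laborious piece; everything after it is formal manipulation with the cyclic identity for $d\Phi$ and the two standing identities ${f}\xi_i=0$, $[Q,{f}]=0$.
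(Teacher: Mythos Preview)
There is a genuine gap in your approach to \eqref{6.1}. You treat the hypothesis as ``weak \emph{almost} ${\cal C}$-structure'' (only $d\Phi=0$ and $d\eta^i=0$), but the statement assumes a weak ${\cal C}$-structure, which by definition is a subclass of weak $K$-structures and is therefore \emph{normal}: $N^{(1)}=0$. Since $d\eta^i=0$, this immediately gives $[{f},{f}]=0$. The paper's proof is then one line: specializing \eqref{3.1} and using Theorem~\ref{thm6.2C} yields
\[
 2\,g((\nabla_X{f})Y,Z)=g([{f},{f}](Y,Z),{f}X)+N^{(5)}(X,Y,Z),
\]
and the Nijenhuis term vanishes by normality. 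Your ``closed system'' obtained by substituting \eqref{4.NN} back into this relation is circular and cannot force the Nijenhuis contribution to vanish on its own: each resubstitution produces new $[{f},{f}]$-terms and the system never closes. A clean counter-check: for $Q={\rm id}$ one has $N^{(5)}\equiv 0$, so \eqref{6.1} would read $\nabla{f}=0$; this is the well-known characterization of a ${\cal C}$-manifold and is \emph{false} on a general almost ${\cal C}$-manifold. Hence \eqref{6.1} genuinely requires normality.

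The same issue recurs in \eqref{6.1c}: the paper computes $2\,g([{f},{f}](X,Y),Z)$ via \eqref{4.NN} and \eqref{6.1}, sets it to zero by normality, and combines with \eqref{6.1b} (with $X$ replaced by ${f}X$). Your proposed route through ``$d\Phi$ evaluated after one factor of ${f}$'' does not lead to \eqref{6.1c}; again the vanishing of $[{f},{f}]$ is what drives the identity. Your argument for \eqref{6.1b} is fine and matches the paper.

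A minor slip in the last displayed formula: applying \eqref{2.2} to $g({f}\nabla_X\xi_i,{f}Z)$ gives $g(\nabla_X\xi_i,QZ)-\sum_j\eta^j(\nabla_X\xi_i)\,\eta^j(Z)$, not $g(\nabla_X\xi_i,Q{f}Z)$. The extra sum does not die from $\eta^j\circ{f}=0$; one needs $g(\nabla_X\xi_i,\xi_j)=0$, which follows from Theorem~\ref{C-K} (each $\xi_i$ is Killing and $\nabla_{\xi_j}\xi_i=0$).
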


\begin{proof}
For a weak almost ${\cal C}$-structure $({f},Q,\xi_i,\eta^i,g)$, using Theorem~\ref{thm6.2C},  from \eqref{3.1} we get
\begin{equation}\label{6.1a}
 2\,g((\nabla_{X}{f})Y,Z)= g([{f},{f}](Y,Z),{f} X) + N^{(5)}(X,Y,Z).
\end{equation}
From \eqref{6.1a}, using condition $[{f},{f}]=0$ we get \eqref{6.1}.
Using \eqref{3.3} and \eqref{6.1}, we can write
\[
  0 = 3\,d\Phi(X,Y,Z) = g((\nabla_X\,{f})Z,Y) +g((\nabla_Y\,{f})X, Z) +g((\nabla_Z\,{f})Y, X),
\]
hence, \eqref{6.1b}.
Using \eqref{4.NN}, \eqref{6.1} and the skew-symmetry of ${f}$, we obtain
\begin{align*}
 0 = 2\,g([{f},{f}](X,Y),Z) & = N^{(5)}(X, Y, {f} Z) + N^{(5)}({f} X, Y, Z) \\
 & - N^{(5)}(Y, X, {f} Z) - N^{(5)}({f} Y,X,Z) .
\end{align*}
This and \eqref{6.1b} with $X$ replaced by ${f} X$ provide \eqref{6.1c}.
\end{proof}

Consider a weaker condition than \eqref{6.1d}:
\begin{align}\label{E-xi31}
% g([\xi_i,\xi_j],\xi_k)=0,\quad 1\le i,j,k\le p.
 [\xi_i,\xi_j]^\bot =0,\quad 1\le i,j\le p.
\end{align}
Recall that a ${K}$-structure is a ${\cal C}$-structure if and only if ${f}$ is parallel, e.g., \cite[Theorem~1.5]{b1970}
(and \cite[Theorem~6.8]{blair2010riemannian} for $p=1)$.
 The following our theorem extends these results.

\begin{theorem}\label{thm6.2D}
A metric weak $f$-structure with $\nabla{f}{=}\,0$ and \eqref{E-xi31} is a~weak ${\cal C}$-structure with $N^{(5)}=0$.
\end{theorem}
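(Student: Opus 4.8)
The statement has two parts: (1) that $\nabla f = 0$ together with \eqref{E-xi31} forces the structure to be a weak ${\cal C}$-structure, i.e.\ that $\Phi$ and the $\eta^i$ are closed and the structure is normal; and (2) that $N^{(5)} = 0$. The plan is to start from $\nabla f = 0$ and read off the consequences one at a time. First I would note that $\nabla f = 0$ immediately gives $\nabla\Phi = 0$ via \eqref{E-nabla-Phi}, hence $d\Phi = 0$ (the exterior derivative of a parallel form vanishes). Second, since $f\xi_i = 0$ and $f$ is parallel, differentiating gives $0 = (\nabla_X f)\xi_i = -f\nabla_X\xi_i$, so $\nabla_X\xi_i \in \ker f$ for every $X$; combined with $\nabla_X(g(\xi_i,\xi_j)) = 0$ this yields $g(\nabla_X\xi_i, \xi_j) + g(\xi_i, \nabla_X\xi_j) = 0$, and a standard symmetrization-type argument (using that $\nabla_X\xi_i$ already lies in $\ker f$ so only its $\ker f$-components matter) should give $\nabla_X\xi_i = 0$ for all $X$. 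The cleanest route is probably: $g(\nabla_X\xi_i, \xi_j)$ is skew in $i,j$, so its diagonal vanishes, and then one shows the off-diagonal part also vanishes using \eqref{E-xi31}, which controls $[\xi_i,\xi_j]^\bot = \nabla_{\xi_i}\xi_j - \nabla_{\xi_j}\xi_i$ projected onto $f(TM)$.

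Third, once $\nabla\xi_i = 0$ for all directions, I would compute $2\,d\eta^i(X,Y) = g(\nabla_X\xi_i, Y) - g(\nabla_Y\xi_i, X) = 0$ using \eqref{E-d-eta}, so every $\eta^i$ is closed. Together with $d\Phi = 0$ this gives a weak almost ${\cal C}$-structure. To upgrade to a genuine weak ${\cal C}$-structure I need normality, i.e.\ $N^{(1)} = 0$; by \eqref{2.6X} and $d\eta^i = 0$ this reduces to $[f,f] = 0$. Using the Nijenhuis formula in the form \eqref{4.NN}, $[f,f](X,Y) = (f\nabla_Y f - \nabla_{fY} f)X - (f\nabla_X f - \nabla_{fX} f)Y$, and since every $\nabla_Z f = 0$, each term vanishes identically, so $[f,f] = 0$ and the structure is normal. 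This establishes part (1).

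For part (2), $N^{(5)} = 0$, I would invoke \eqref{6.1}: for a weak ${\cal C}$-structure we have $2\,g((\nabla_X f)Y,Z) = N^{(5)}(X,Y,Z)$, and since $\nabla f = 0$ the left side is zero for all $X,Y,Z$, hence $N^{(5)} \equiv 0$. (One should make sure the derivation of \eqref{6.1} did not secretly assume \eqref{6.1d} rather than the weaker \eqref{E-xi31}; but \eqref{6.1} is obtained purely from $[f,f] = 0$ and the weak almost ${\cal C}$ conditions via \eqref{3.1} and \eqref{6.1a}, which we have just verified, so it applies here.)

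\textbf{Main obstacle.} The delicate point is the step $\nabla_X\xi_i = 0$ for \emph{all} $X$, not just $X \in f(TM)$ or $X = \xi_j$: from $\nabla f = 0$ one directly gets only $\nabla_X\xi_i \in \ker f$, and the vanishing of the $\ker f$-component of $\nabla_X\xi_i$ is exactly where the hypothesis \eqref{E-xi31} must be used, since it is the torsion-type information $\nabla_{\xi_i}\xi_j - \nabla_{\xi_j}\xi_i \in \ker f$ plus the metric compatibility $g(\nabla_{\xi_k}\xi_i,\xi_j) = -g(\xi_i,\nabla_{\xi_k}\xi_j)$ that pins down all the components $g(\nabla_{\xi_k}\xi_i, \xi_j)$ to be zero. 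Once that is cleared the rest is a direct chain of consequences, and I would be careful to present the $\nabla\xi = 0$ argument in full detail while treating the closedness of $\Phi$, $\eta^i$, the normality, and $N^{(5)} = 0$ as short corollaries of the parallelism.
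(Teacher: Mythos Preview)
Your approach diverges from the paper's precisely at the point you flag as the main obstacle. The paper never tries to prove $\nabla_X\xi_i=0$; instead it plugs $Z=\xi_i$ into the master formula \eqref{3.1} and, using \eqref{E-cond1} (derived from \eqref{4.NNxi}) together with the claimed vanishing of $N^{(2)}_i(Y,\xi_j)$, obtains $d\eta^i(fY,X)=0$ directly, then combines this with \eqref{E-xi31} to get $d\eta^i=0$. Your route instead aims for $\nabla_X\xi_i=0$ outright and then reads off $d\eta^i=0$ from \eqref{E-d-eta}. The symmetrization you sketch does kill $g(\nabla_{\xi_k}\xi_i,\xi_j)$, but for $X\in f(TM)$ you have only the skew-symmetry $g(\nabla_X\xi_i,\xi_j)=-g(\nabla_X\xi_j,\xi_i)$, which leaves an undetermined $\mathfrak{so}(p)$-valued function of $X$; hypothesis \eqref{E-xi31} says nothing about such $X$.

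In fact this gap cannot be closed, because the theorem is false as stated for $p\ge 2$. Take $M=\mathbb{R}^2_{x,y}\times\mathbb{R}^2_{s,t}$ with the flat metric, $f=J\oplus 0$, $Q=\mathrm{id}$, and the rotated orthonormal frame $\xi_1=\cos x\,\partial_s+\sin x\,\partial_t$, $\xi_2=-\sin x\,\partial_s+\cos x\,\partial_t$. One checks that $(f,Q,\xi_i,\eta^i,g)$ is a metric weak $f$-structure with $\nabla f=0$ and $[\xi_1,\xi_2]=0$, yet $\nabla_{\partial_x}\xi_1=\xi_2\neq 0$ and $d\eta^1=-\sin x\,dx\wedge ds+\cos x\,dx\wedge dt\neq 0$, so the structure is not weak $\mathcal{C}$. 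The same example shows the paper's intermediate claims break: the specialization \eqref{4.NNxi} is misstated (the correct consequence of \eqref{4.NN} is $[f,f](X,\xi_i)=f(\nabla_{\xi_i}f)X+f^2\nabla_X\xi_i-f\nabla_{fX}\xi_i$, with an extra $f$ on the last two terms), so \eqref{E-cond1} does not follow, and $N^{(2)}_i(Y,\xi_j)=-g(\nabla_{fY}\xi_j,\xi_i)$ is nonzero here. For $p=1$ both your route and the paper's are valid, since then $g(\nabla_X\xi_1,\xi_1)=0$ together with $\nabla_X\xi_1\in\ker f$ already forces $\nabla\xi_1=0$.
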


\begin{proof}
Using condition $\nabla{f}=0$, from \eqref{4.NN} we obtain $[{f},{f}]=0$.
Hence, from \eqref{2.6X} we get $N^{(1)}(X,Y)=2\,\sum\nolimits_{\,i} d\eta^i(X,Y)\,\xi_i$,
and from \eqref{4.NNxi} we obtain
\begin{align}\label{E-cond1}
 \nabla_{{f} X}\,\xi_i - {f}\,\nabla_{X}\,\xi_i = 0,\quad X\in \mathfrak{X}_M.
\end{align}
From \eqref{3.3}, we calculate
\[
 3\,d\Phi(X,Y,Z) = g((\nabla_{X}{f})Z, Y) + g((\nabla_{Y}{f})X,Z) + g((\nabla_{Z}{f})Y,X);
\]
hence, using condition $\nabla{f}=0$ again, we get $d\Phi=0$. Next,
\begin{align*}
 N^{(2)}_i(Y,\xi_j) = -\eta^i([{f} Y,\xi_j]) = g(\xi_j, {f}\nabla_{\xi_i} Y) =0.
\end{align*}
Thus, setting $Z=\xi_i$ in Proposition~\ref{lem6.1} and using the condition $\nabla{f}=0$ and the properties
$d\Phi=0$, $N^{(2)}_i(Y,\xi_j)=0$ and $N^{(1)}(X,Y)=2\sum\nolimits_{\,i} d\eta^i(X,Y)\,\xi_i$, we find
 $0 = d\eta^i({f} Y, X) - N^{(5)}(X,\xi_i, Y)$.
 By~\eqref{KK} and \eqref{E-cond1}, we get
\[
 N^{(5)}(X,\xi_i, Y) = g([\xi_i,{f} Y]^\top -{f}[\xi_i,Y],\, \tilde Q X)
 = g(\nabla_{{f} Y}\,\xi_i - {f}\,\nabla_{Y}\,\xi_i,\, \tilde Q X) = 0;
\]
hence, $d\eta^i({f} Y, X)=0$. From this and condition
\[
 g([\xi_i,\xi_j],\xi_k)=2\,d\eta^k(\xi_j, \xi_i)=0
\]
we get $d\eta^i=0$. By the above, $N^{(1)}=0$.
Thus, $({f},Q,\xi_i,\eta^i,g)$ is a weak ${\cal C}$-structure.
Finally, from \eqref{6.1} and condition $\nabla{f}=0$ we get $N^{(5)}=0$.
\end{proof}

\begin{example}\rm
Let $M$ be a $2n$-dimensional smooth manifold and $\tilde{f}:TM\to TM$ an endomorphism of rank $2n$ such that
$\nabla\tilde{f}=0$.
To construct a weak ${\cal C}$-structure on $M\times\  mathbb{R}^p$ or $M\times \mathbb{T}^p$, where
$\mathbb{R}^p$ is a Euclidean space and $\mathbb{T}^p$ is a $p$-dimensional flat torus,
take any point $(x, t_1,\ldots,t_p)$ of either
space and set $\xi_i = (0, d/dt_i)$, $\eta^i =(0, dt_i)$~and
\[
 {f}(X, Y) = (\tilde{f} X, 0),\quad
 Q(X, Y) = (-\tilde{f}^{\,2} X,\, Y).
\]
where $X\in T_xM$ and $Y= \sum_i Y^i\xi_i\in\{\mathbb{R}^p_t, \mathbb{T}^p_t\}$.
Then \eqref{2.1} holds and Theorem~\ref{thm6.2D} can be used.
\end{example}

For $p=1$, from Theorem~\ref{thm6.2D} we have the following

\begin{corollary}[see \cite{RovP-arxiv}]
Any weak almost contact structure $(\varphi,Q,\xi,\eta,g)$ with the property $\nabla\varphi=0$
is a~weak cosymplectic structure, i.e., $d\Phi=0$ and $d\eta=0$, with vanishing tensor $N^{(5)}$.
\end{corollary}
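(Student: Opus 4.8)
The plan is to read the corollary off Theorem~\ref{thm6.2D} in the case $p=1$. A weak almost contact metric structure $(\varphi,Q,\xi,\eta,g)$ on $M^{2n+1}$ is by definition nothing but a metric weak $f$-structure with $p=1$, with $\varphi$ in the role of $f$ and $\xi=\xi_1,\ \eta=\eta^1$ in the roles of $\xi_i,\eta^i$; the hypothesis $\nabla\varphi=0$ is exactly the hypothesis $\nabla f=0$ of the theorem. Hence the only point to verify before applying Theorem~\ref{thm6.2D} is its auxiliary condition \eqref{E-xi31}, which for $p=1$ reads $[\xi,\xi]^\bot=0$; this is automatic, since $[\xi,\xi]=0$. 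Therefore Theorem~\ref{thm6.2D} applies and gives at once that $(\varphi,Q,\xi,\eta,g)$ is a weak ${\cal C}$-structure with $N^{(5)}=0$.

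It then remains only to unwind the terminology. By the definition of a weak ${\cal C}$-manifold, a weak ${\cal C}$-structure is a normal metric weak $f$-structure with $d\Phi=0$ and $d\eta^i=0$ for all $i$; for $p=1$ this is precisely what is called a weak cosymplectic structure (as noted just after that definition), and the two closedness conditions become $d\Phi=0$ and $d\eta=0$. Combined with $N^{(5)}=0$, this is exactly the assertion of the corollary, so nothing further is needed.

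If one wanted to avoid invoking Theorem~\ref{thm6.2D} and argue in place, one would simply repeat its proof with $p=1$: from $\nabla\varphi=0$ the identity \eqref{4.NN} gives $[\varphi,\varphi]=0$, identity \eqref{4.NNxi} gives $\nabla_{\varphi X}\xi=\varphi\,\nabla_X\xi$, and the coboundary formula \eqref{3.3} gives $d\Phi=0$; then \eqref{KK} together with $\nabla_{\varphi X}\xi=\varphi\,\nabla_X\xi$ forces $N^{(5)}(X,\xi,Y)=0$, and substituting this (and the facts just listed) into Proposition~\ref{lem6.1} with $Z=\xi$ yields $d\eta(\varphi Y,X)=0$. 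Since the only component of $d\eta$ not covered by this is $d\eta(\xi,\xi)$, which vanishes trivially when $p=1$, we obtain $d\eta=0$, hence $N^{(1)}=0$, and finally $N^{(5)}=0$ from \eqref{6.1} and $\nabla\varphi=0$. There is no genuine obstacle: all the substance already sits in Theorem~\ref{thm6.2D}, and the one nontrivial consequence of $\nabla\varphi=0$ needed here is the identity $\nabla_{\varphi X}\xi=\varphi\,\nabla_X\xi$, which annihilates $N^{(5)}(\,\cdot\,,\xi,\,\cdot\,)$ and thereby simultaneously closes $\eta$ and kills the normality obstruction.
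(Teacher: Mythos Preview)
Your proposal is correct and matches the paper's approach exactly: the paper derives the corollary simply by specializing Theorem~\ref{thm6.2D} to $p=1$, and you do the same, correctly observing that the auxiliary condition \eqref{E-xi31} is vacuous when $p=1$ since $[\xi,\xi]=0$. Your additional third paragraph (the in-place reproof) is sound but superfluous for the corollary as stated.
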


\end{document}